\theoremstyle{plain}
\newtheorem{theorem}{Theorem}[section]
\newtheorem{lemma}[theorem]{Lemma}
\newtheorem{proposition}[theorem]{Proposition}
\newtheorem*{thm*}{Theorem}
\theoremstyle{definition}
\newtheorem{remark}[theorem]{Remark}
\newtheorem{definition}[theorem]{Definition}
\newtheorem{example}[theorem]{Example}
\newcommand{\R}{\mathbb{R}}
\newcommand{\Z}{\mathbb{Z}}
\newcommand{\M}{\mathbb{M}}
\newcommand{\A}{\mathbb{A}}
\renewcommand{\H}{\mathbb{H}}
\renewcommand{\S}{\mathcal{S}}
\newcommand{\ip}[2]{\langle {#1}, {#2} \rangle}
\newcommand{\Pro}{\mathbb{P}}
\newcommand{\hermdecomp}[1]{\mathbb{#1}^{M\times N}_{\mathrm{sym}} \times \mathbb{#1}^{M\times N}_{\mathrm{skew}}}
\newcommand{\C}{\mathbb{C}}
\newcommand{\K}{\mathbb{K}}
\renewcommand{\o}[1]{\overline{#1}}
\renewcommand{\b}{\mathcal{B}}
\newcommand{\V}{\mathbb{V}}
\newcommand{\B}{\mathcal{B}}
\newcommand{\UU}{\mathcal{U}}
\newcommand{\VV}{\mathcal{V}}
\renewcommand{\L}{\mathbb{L}}
\DeclareMathOperator{\supp}{Supp}
\DeclareMathOperator{\GL}{GL}
\DeclareMathOperator{\rank}{rank}
\title{The generic crystallographic phase retrieval problem} 
\author{Dan Edidin, Arun Suresh}
\address{Department of Mathematics, Universit of Missouri, Columbia, MO 65211}
\email{edidind@missouri.edu, aszxy@missouri.edu}
\date{\today}
\begin{document}
\maketitle
\begin{abstract}
  In this paper we consider the problem of recovering a signal $x \in
  \R^N$ from its power spectrum assuming that the signal is sparse
  with respect to a generic basis for $\R^N$. Our main result is that
  if the sparsity level is at most $\sim\! N/2$ in this basis then the
  generic sparse vector is uniquely determined up to sign from its
  power spectrum. We also prove that if the sparsity level is $\sim\!
  N/4$ then every sparse vector is determined up to sign from its
  power spectrum. Analogous results are also obtained for the power spectrum of
  a vector in $\C^N$
which extend earlier results of Wang and Xu \cite{wang2014phase}.
\end{abstract}

\section{Introduction}
The crystallogrpahic phase retrieval problem entails recovering a sparse
signal $x \in \R^N$ from its power spectrum which denote $|Fx|^2$. It is, by definition, the componentwise magnitude-squared of the discrete Fourier transform $Fx$ of the signal. Due to its connections to X-ray crystallography
there is an extensive literature,
particularly for binary signals,
going back to the work of Patterson \cite{patterson1934fourier, patterson1944ambiguities}.
The mathematical foundations of this problem for vectors whose non-zero entries are generic was studied in
\cite{bendory2020toward, ghosh2022sparse}. In \cite{bendory2020toward} several conjectures were made regarding the level of sparsity required to recover a generic signal up to reflection and shift ambiguities from its power spectrum. 

The purpose of this paper is to consider a relaxation
which we call the {\em generic crystallographic phase retrieval problem}.
In our setup we consider signals which have sparse representations
with respect to a {\em generic basis} for $\R^N$. In doing so we obtain optimal provable bounds
for the sparsity level required to solve this phase retrieval problem.

Given a signal $x_0 \in \R^N$  whose expansion in terms
of a basis $\V$ is sparse, 
the generic phase retrieval problem can be interpreted as follows:
Determine whether the intersection 
of two  non-convex sets $\B_{x_0}  \cap \S_\V = \{\pm x_0\}$.
Here $\B_{x_0} =\{x\in \R^N| |Fx|^2= |Fx_0|^2\}$ is the set of vectors with the same power
spectrum as $x_0$ and $\S_\V$ is the set of vectors in $\R^N$ whose expansion
in the basis $\V$ has at most $M < N$ non-zero coordinates.
Geometrically we can view $\B_{x_0}$ as the orbit of $x_0$ under the product
of planar rotation groups of total dimension $\lceil N/2 \rceil -1$ \cite[Section 5]{bendory2020toward}, while  $\S_\V$ is the union of ${N \choose M}$ linear subspaces
of dimension $M$.  A simple dimension count shows that this intersection
should be positive-dimensional unless $M \leq \lfloor N/2 \rfloor$. Our main result
states that if $\V$ is a generic basis then the bound $M \leq \lfloor N/2 \rfloor$ is optimal for generic vectors $x_0$ whose expansion in the basis $\V$
has at most $M$ non-zero coordinates. Moreover,
we also prove that if $M < (N+3)/4$ then {\em every vector} which is $M$-sparse
with respect to $\V$ can be recovered from its power spectrum.

\subsection{Statement of the main result}
Let $\K = \R$ or $\K = \C$
and let $\V = \{v_1,\dots, v_{N}\}$ be an ordered basis of $\K^N$.
If $x \in \K^N$ is a vector which we expand in terms
of the basis $\V$ as $x = \sum_{n=1}^{N} x_n v_n$ then we define
the {\em support} of $x$ with respect to $\V$ to be the set
$\supp_{\V}(x) = \{n| x_n \neq 0\} \subset [1,N]$.
We say that $v$ is $M$-sparse (with respect to $\V$) if
$|\supp_{\V}(v)| \leq M$.
Given a subset $S \subseteq [1,N]$ with $|S|=M$, set
$$\mathbb{L}_S(\V) = \mathrm{span}(v_i| i \in S).$$
It is clear that $x \in \K^N$ is $M-$sparse iff $x \in \L_S(\V)$ for
some subset $S \subset [1,N]$ with $|S| = M$.

\begin{theorem}\label{thm.realsparse}
  Let  ${\V}$ be a generic basis for $\R^N$.
  \begin{itemize}
\item[(i)]  If  $M< N/4 +1$ ($N$ even) or $M < (N+3)/4$ ($N$ odd) then
every $M$-sparse vector can be recovered up to a global sign from its power spectrum.
\item[(ii)] If $M < \lfloor N/2 \rfloor +1 $ then a generic $M$-sparse vector can be recovered up to a global sign from its power spectrum.
  \end{itemize}
\end{theorem}

\begin{remark}
If $x=(x_1, \ldots , x_{N}) \in \R^N$ and $\sigma \in D_{2N}$ is an element of the dihedral group which acts on $\R^N$ by cyclic shifts
and reflection, then the
magnitudes of the Fourier coefficients of $x$ and $\sigma x$ are the same~\cite{beinert2015ambiguities}. In particular if a vector
is $M$-sparse with respect to the standard basis then any dihedral shift of the vector is another $M$-sparse vector with the same power spectrum. Thus, the inherent ambiguity group of the phase retrieval problem for sparse vectors is the dihedral group. However, if $x$ has a sparse expansion with respect to some other basis $\V$ then a dihedral shift of
the vector will not in general be sparse with respect to $\V$. For this reason the ambiguity group of
a vector which is sparse with respect to a generic basis will only be $\{\pm 1\}$. 
\begin{example}
Consider the orthonormal basis $\V = \{e_1, e_2, v_3, v_4\}$ for $\R^4$ where $e_1 = (1,0,0,0), e_2= (0,1,0,0), v_3 = (0,0,\frac{\sqrt{2}}{2}, \frac{\sqrt{2}}{2}), v_4 =(0,0,-\frac{\sqrt{2}}{2},\frac{\sqrt{2}}{2})$.
The vector $x = (1,1,0,0) = e_1 + e_2$ is 2-sparse with respect to this basis (i.e. only two of the coefficients in the expansion in terms of the basis $\V$ are non-zero). However the shifted vector $(0,1,1,0)= e_2 + {1\over{\sqrt{2}}}(-v_3 + v_4)$ is not $2$-sparse with respect to $\V$.
\end{example}
\end{remark}

\begin{remark} Even if we consider signals which are sparse with respect to the standard basis up to dihedral equivalence, Theorem~\ref{thm.realsparse}(i) does not hold. The reason is that there are well-known examples of
binary signals which are not dihedrally equivalent but which have the same power spectrum. As first observed by Patterson~\cite{patterson1944ambiguities}, two binary signals have the same power spectrum if and only if their support sets have the same cyclic difference sets. However, there are numerous examples of non-dihedrally equivalent subsets of $[1,N]$ which have the same cyclic difference sets. For more detail see~\cite{bendory2020toward}.\end{remark}

\subsubsection{Comparison of Theorem~\ref{thm.realsparse}(ii) with the conjectural bounds of \cite{bendory2020toward}}
In \cite[Conjecture 4.7]{bendory2020toward} it is conjectured
that for a generic signal $x \in \R^N$ supported on a set $S\subset [1,N]$
with respect to the standard basis, and $|S-S| > M$
then $x$ can be recovered up to the action of the dihedral group from its power spectrum. Here $S-S$ is the {\em cyclic difference set} of $S$, and a necessary condition for $|S-S| >  M$ is
that $M = |S| < \lfloor N/2 \rfloor +1$ which is the
same bound we obtain for generic bases. However, if
the support set $S$ is an arithmetic progression then
the generic vector with support $S$ is provably non-recoverable \cite[Proposition 4.4]{bendory2020toward}. This further shows
that the standard basis for $\R^N$ is not completely generic. By contrast
if we assume that our basis $\V = (v_1,\ldots , v_N)$
is generic and if $M < \lfloor N/2  \rfloor +1$ then for {\em every subset}
$S \subset [1,N]$ the generic vector in the subspace $\L_S(\V)$ spanned
by $\{v_i| i \in S\}$ can be recovered from its power spectrum.

\subsection{Physical motivation}
As the title suggests, this work is motivated by X-ray crystallography
which is a prevalent technology for determining three-dimensional
molecular structures.  In
X-ray crystallography, the signal is the electron density of the
crystal -- a periodic arrangement of a repeating compactly supported
unit which is illuminated with a beam of X-rays producing a diffraction
pattern. The diffraction pattern is equivalent to the magnitude of the Fourier transform
of the electron density function of the crystal \cite{elser2018benchmark}.

If we discretize then, due to the periodic nature of a
crystal, the electron density function can be viewed as a function on
a finite abelian group. When we take the group to be $\Z_N$ the
problem of recovering the electron density function from its
diffraction pattern is the problem of recovering a vector $x_0 \in
\R^N$ from its power spectrum \cite{bendory2023finite}.

This problem is ill-posed
without prior information on the signal. To that end mathematicians
have studied this problem,
under the
assumption that $x_0$ is sparse in the standard basis, motivated by
the fact that the density of atoms in a typical protein crystal is
estimated to be on the order of $1\%$
\cite{elser2017complexity}. However, there are also models which
encode the sparsity of a 3-D molecular stucture by a sparse expansion
in other bases such as wavelets \cite{bendory2023autocorrelation, daubechies1992ten}. 
This model
of sparsity closely reflects our notion of generic sparsity.  For the
mathematical problem of recovering molecular structures from the
second-moment of cryo-EM measurements this sparsity assumption was made in
\cite{bendory2022sample, bendory2023autocorrelation}. Indeed this previous
work on cryo-EM was one of our motivations for writing this paper.

\subsection{Fourier phase retrieval and frame phase retrieval.}
A related question which has been extensively studied in
the literature \cite{balan2006signal, balan2009painless, candes2013phase, candes2013phaselift,fickus2014phase, wang2014phase, bodmann2015stable, conca2015algebraic, botelho-andrade2016phase, bajarovska2016phase,
  iwen2017robust, li2019phase, evans2020conjugate} is the problem of recovering a signal $v \in \K^M$ from $N$-phaseless frame measurements $(|\langle v, \alpha_n\rangle|)_{n=1}^N$, where $\K = \R$ or $\K = \C$.
  The frame phase retrieval problem can be reinterpreted
  as the problem of recovering
  a vector $x_0 \in \K^N$ from $|x_0|$ with the constraint that $x_0$
  lies in the $M$-dimensional linear subspace of $\K^N$ which is the range of the analysis operator of the frame\footnote{Throughout this paper if 
$x \in \K^N$ then $|x|$ refers to the vector whose entries are the absolute values of the entries of $x$.}.

When $\K =\R$ this is a very different problem than
recovering a vector from its power spectrum. The reason is
that, for the frame phase retrieval problem, the
set $\B_{x_0} = \{y \in \R^N| |y| = |x_0|\}$ is zero-dimensional because
it consists of the $2^N$ vectors whose
coordinates differ from the corresponding coordinates of $x_0$ by a sign
In contrast, the
set of vectors with the same power spectrum as $x_0$ has dimension
$\sim\! N/2$. This is why the bound $M < (N+1)/2$ of \cite{balan2006signal}
for real frame phase retrieval is much stronger than the
bound $M < (N+3)/4$ obtained in 
Theorem \ref{thm.realsparse}.

On the other hand when $\K = \C$ and $x_0 \in \C^N$ then if $F$ is the discrete Fourier transform matrix, the
sets $\{y||y| = |x_0|\}$ and $\{y| |Fy| = |Fx_0|\}$ both have real dimension
$N$ because they are both orbits of $(S^1)^N$ actions. For the complex Fourier phase retrieval problem we obtain the following result with the same bounds as those obtained
for complex frame phase retrieval.
\begin{theorem} \label{thm.complexsparse}
  Let  ${\V}$ be a generic basis for $\C^N$.
  \begin{itemize}
\item[(i)]  If  $M < (N+3)/4$ then
  every $M$-sparse vector can be recovered up to a global phase from its power spectrum.
\item[(ii)] If $M < (N+1)/2$ then a generic $M$-sparse vector can be recovered up to a global phase from its power spectrum.
  \end{itemize}
  \end{theorem}
Indeed the proof of Theorem \ref{thm.complexsparse} is obtained by
reducing the problem to a complex frame phase retrieval problem
using the observation that multiplication by the complex
$N \times N$ DFT matrix induces an automorphism on the space of $N$-element
complex frames in $\C^M$.
We then
extend techniques developed in \cite{conca2015algebraic} to pairs
of overlapping frames to prove our result. We note that Theorem \ref{thm.complexsparse}(i)
is, via our reduction, equivalent to
\cite[Theorem 2.2]{wang2014phase}, although our proof is different.

By contrast the proof of Theorem \ref{thm.realsparse} cannot be
reduced to a frame phase retrieval problem because the DFT matrix has
complex entries.  To prove the theorem we realize the auto-correlation
as the {\em second moment} for the action of the dihedral group
$D_{2N}$ on $\R^N$. We then use the representation-theoretic analysis
of the second moment developed in \cite{bendory2022sample} to give an
equivalent but simpler expression for the auto-correlation (Section
\ref{sec.secondmoment}). To complete the proof we extend the
algebro-geometric techniques of \cite{conca2015algebraic}.

\section{Background in Algebraic Geometry}
This section provides a brief overview of results from algebraic geometry that play a crucial role in our investigations. Let $\K$ be a field (usually $\R$ or $\C$). We endow $\K^N$ with the
Zariski topology defined by letting the closed sets be given by the
vanishing loci of polynomials in $\K[x_1,\dots, x_N]$. We call such
closed sets \textit{affine varieties}. Moreover, if these polynomials
are homogeneous, then their vanishing locus defines a subset of the
projective space $\Pro(\K^N)$, and is called a \textit{projective
  variety}. Open subsets (under the subspace topology) of affine
(resp. projective) varieties are called quasi-affine (resp. quasi-projective)
varieties.
It is also noteworthy that Zariski closed sets of $\R^N$ or $\C^N$
are also closed in the Euclidean topology, and their complements,
when non-empty, are dense in the Euclidean topology.

We say that a variety is {\em irreducible} if it is irreducible as a
topological space in the Zariski topology, meaning that it cannot be
decomposed into a union of two proper closed subsets. With this
definition, any non-empty Zariski open set in an irreducible variety
is necessarily dense.

We say that a property (*) holds {\em generically} on an irreducible variety $X$
if the set of points $p \in X$ satisfying (*) contains a non-empty Zariski open set. Note that this is stronger than the assertion that the set of points
for which (*) holds is dense in the Zariski topology. For example
the integers are Zariski dense in $\R$ but do not contain a non-empty Zariski open set of $\R$.

In our proofs we rely heavily on
the notion of the dimension of a variety. For an elementary introduction to
the  many
equivalent definitions of dimension, see \cite[Chapter 9]{Harris}.
In particular, we call the vanishing locus of
a single non-constant polynomial a {\em hypersurface} and the dimension of a
hypersurface in $\K^N$ is always $N-1$. We also take advantage of the
interplay between real and complex varieties. Given a complex variety
$X$, we denote by $X_\R$ the real points of $X$.

A {\em morphism} $f \colon X \to Y$ of varieties is a continuous
map with the property that for every point $p \in X$ there is a Zariski open
set $U$ containing $p$ and polynomial functions $g,h$ such that $h$ does not vanish on $U$ and $f(q) = (g/h)(q)$ for all points in $U$.
A morphism $f \colon X \to Y$ is {\em dominant} if $f(X)$ is a Zariski dense
subset of $Y$. In this case there is a non-empty Zariski open set $U \subset Y$
contained in $f(X)$ \cite[Exercise 3.19]{hartshorne}. A surjective morphism is always dominant.

The following property of dimension and morphisms will be used extensively.
\begin{proposition} \label{prop.vakil}
  \cite[Exercise 3.22]{hartshorne}
  If $f \colon X \to Y$ is a dominant morphism of irreducible varieties of dimensions $m$ and $n$
  respectively then
there exists a non-empty Zariski open set $U \subset Y$
    such that for all $y \in U$, $\dim f^{-1}(y) = m-n$.
\end{proposition}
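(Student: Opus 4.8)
The plan is to prove the proposition by establishing two complementary estimates on the dimension of a fibre and then intersecting the open sets on which they hold. It is convenient to pass to affine models: since $Y$ is irreducible it contains a non-empty affine open $V$, and since $f$ is dominant I can choose a non-empty affine open $W \subseteq f^{-1}(V)$, which is automatically dense in $X$; then $f|_W \colon W \to V$ is a dominant morphism of irreducible affine varieties of the same dimensions, corresponding to an injection of coordinate rings $\varphi \colon A \hookrightarrow B$, where $A = \K[V]$ and $B = \K[W]$ are finitely generated integral domains over $\K$ with $\dim A = n$ and $\dim B = m$. Both estimates will in fact hold for $f$ itself, so I only use this affine picture where a ring computation is wanted.

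First I would prove the lower bound: \emph{every} non-empty fibre $f^{-1}(y)$ has all of its irreducible components of dimension at least $m-n$. Fixing $y$ in the image and shrinking $Y$ to an affine neighbourhood of $y$, Noether normalization gives a finite surjection $\pi \colon Y \to \A^n$, and $\pi(y)$ is cut out in $\A^n$ by $n$ linear forms $\ell_1, \dots, \ell_n$. Then $\pi^{-1}(\pi(y))$ is a finite set containing $y$, so its preimage under $f$ — the subvariety of $X$ where the $n$ functions $(\pi\circ f)^*\ell_1, \dots, (\pi\circ f)^*\ell_n$ vanish — is a disjoint union of closed sets, one of which is $f^{-1}(y)$; hence $f^{-1}(y)$ is a union of irreducible components of a subvariety of $X$ defined by $n$ equations, and Krull's principal ideal theorem bounds the codimension of each such component by $n$, giving dimension at least $m-n$.

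Next I would prove the upper bound by computing the generic fibre and spreading it out. Localizing $\varphi$ at $S = A \setminus \{0\}$ yields an injection $\K(Y) \hookrightarrow S^{-1}B$ with $S^{-1}B$ a finitely generated integral domain over the field $\K(Y)$ whose fraction field is $\K(X)$, so $\dim S^{-1}B = \operatorname{trdeg}_{\K(Y)}\K(X) = m - n$; that is, the generic fibre of $f$ has dimension exactly $m-n$. It remains to produce a non-empty open $U' \subseteq Y$ with $\dim f^{-1}(y) \le m-n$ for all $y \in U'$. By the upper semicontinuity of fibre dimension (a theorem of Chevalley) the locus $\{\, y \in Y : \dim f^{-1}(y) \ge m-n+1 \,\}$ is constructible, and since it omits the generic point of $Y$ it is contained in a proper closed subset; its complement is the desired $U'$. (If one prefers to avoid citing this theorem, the same conclusion follows by induction on $m$: were that locus dense it would contain a dense open set, which would force some proper closed subvariety $Z \subsetneq X$ to dominate $Y$ with all generic fibres of dimension $\ge m-n+1$, contradicting the proposition applied to $Z \to Y$ because $\dim Z \le m-1$; the base case $m = n$ holds since a variety of dimension $< n$ cannot dominate $Y$.)

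To conclude, I would invoke the fact recorded just above the proposition that a dominant morphism has image containing a non-empty open set $U_0 \subseteq Y$, and take $U = U_0 \cap U'$: for $y \in U$ the fibre $f^{-1}(y)$ is non-empty, every component has dimension at least $m-n$, and the total dimension is at most $m-n$, so $\dim f^{-1}(y) = m-n$. The hard part will be the spreading-out step in the upper bound — passing from ``the generic fibre has dimension $m-n$'' to a uniform bound over an entire dense open subset of $Y$. That is where the real content of the statement sits, and it is exactly the point at which one must bring in either the semicontinuity of fibre dimension or the dimension induction sketched above.
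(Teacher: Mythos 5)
The paper does not prove this proposition at all --- it is quoted as Hartshorne, Exercise 3.22 and used as a black box --- so there is no internal proof to compare against. Your argument is essentially the standard solution to that exercise, and its main line is correct: the lower bound (every component of every non-empty fibre has dimension at least $m-n$, obtained by Noether-normalizing an affine neighbourhood of $y$ in $Y$ and applying Krull's principal ideal theorem to the $n$ pulled-back coordinate functions, after noting that $f^{-1}(y)$ is open and closed in $(\pi\circ f)^{-1}(\pi(y))$ so its components are components of a locus cut out by $n$ equations); the upper bound (the generic fibre has dimension $\operatorname{trdeg}_{\K(Y)}\K(X)=m-n$, and the jump locus $\{y: \dim f^{-1}(y)\ge m-n+1\}$ is constructible and misses the generic point, hence lies in a proper closed subset); and the final intersection with a non-empty open subset of $f(X)$ to guarantee the fibres are non-empty. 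You are also right to identify the spreading-out step as where the real content lives.

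One caveat: the parenthetical induction you offer as a way to avoid citing semicontinuity is not airtight. First, ``a dense constructible set contains a dense open set'' already invokes the constructibility you are trying to avoid. Second, the passage from ``the jump locus is dense in $Y$'' to ``some \emph{proper} closed $Z\subsetneq X$ dominates $Y$ with oversized fibres'' is unjustified: the union of the oversized fibre components could a priori be dense in $X$, so its closure need not be proper, and the induction does not close. The standard elementary repair is to spread out a Noether normalization of the generic fibre: choose $u_1,\dots,u_{m-n}\in B$ with $S^{-1}B$ finite over $\K(Y)[u_1,\dots,u_{m-n}]$, clear denominators to get $0\neq a\in A$ with $B_a$ finite over $A_a[u_1,\dots,u_{m-n}]$, so every fibre of $f|_W$ over $\{a\neq 0\}$ is finite over $\A^{m-n}$ and hence of dimension at most $m-n$; the components of fibres lying in $X\setminus W$ are then handled by induction on $\dim X$. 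Since your primary route (citing Chevalley) is legitimate, this does not sink the proof; it only means the ``elementary'' alternative should be dropped or replaced by the argument just sketched.
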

We will also use a property of morphisms called {\em flatness}.
The definition is somewhat technical so for brevity
we refer the reader to an advanced textbook in algebraic geometry such as \cite[Section III.9]{hartshorne}. In this paper we use the following
properties of flatness.
\begin{proposition} \label{prop.flatness}
  \phantom\\
  \begin{enumerate}
  \item[(i)] If $f \colon X \to Y$ is a flat morphism of varieties
    then $f(U)$ is Zariski open
    for any Zariski open set $U \subset X$.
  \item[(ii)] A projection $\pi \colon X \times Y \to Y$ is always flat.
  \item[(iii)] If $f \colon X \to Y$ is a morphism such that for every
    $y \in Y$, $f^{-1}(y)$ is a hypersurface of degree $d$ in $\K^N$ or $\Pro ^N$
    then $f$ is flat, and more generally if every fiber $f^{-1}(y)$ is a product of hypersurfaces then $f$ is flat.
  \item[(iv)] If $Y$ is irreducible and $f \colon X \to Y$ is flat
    then $\dim f^{-1}(y)$ is constant for all $y \in Y$ and every irreducible
    component of $X$ has dimension equal to $\dim Y + \dim f^{-1}(y)$.
  \end{enumerate}
\end{proposition}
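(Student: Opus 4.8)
Each of the four clauses is assembled from the standard development of flatness in \cite[Section III.9]{hartshorne} together with a modest amount of bridging, so the plan is simply to identify the relevant statement in each case.

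For (i), the input we would use is that a flat morphism of finite type between Noetherian schemes is an open map \cite[Exercise III.9.1]{hartshorne}. All morphisms of varieties here are of finite type, and flatness is local on the source, so the restriction $f|_U \colon U \to Y$ of a flat morphism to a Zariski open $U \subseteq X$ is again flat of finite type; hence $f(U) = (f|_U)(U)$ is Zariski open. For (ii), the structure morphism $X \to \operatorname{Spec}\K$ is flat because every $\K$-module is free, and flatness is stable under base change, so $\pi \colon X \times Y = X \times_{\operatorname{Spec}\K} Y \to Y$ is flat; concretely, when $X$ is affine the coordinate ring of $X \times Y$ is a free module over that of $Y$, and one reduces to this case by covering $X$ by affines.

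Clause (iii) is the one that requires genuine care, and I expect it to be the main obstacle. The plan is to reduce to the projective setting and invoke the criterion that, over an integral Noetherian base $T$, a closed subscheme of $\Pro^n_T$ is flat over $T$ if and only if the Hilbert polynomial of its fibers is constant \cite[Theorem III.9.9]{hartshorne}. A hypersurface of degree $d$ in $\Pro^N$ has Hilbert polynomial $\binom{m+N}{N} - \binom{m-d+N}{N}$, which depends only on $N$ and $d$, so any family in $\Pro^N_Y$ all of whose fibers are degree-$d$ hypersurfaces is flat. For the affine statement we would write $X = \operatorname{Spec}\bigl(\mathcal O(Y)[x_1,\dots,x_N]/(F)\bigr)$ and homogenize $F$ with respect to a new variable to \emph{exact} degree $d$; since every fiber $F_y$ has degree exactly $d$, this homogenization commutes with specialization, producing a family in $\Pro^N_Y$ with degree-$d$ hypersurface fibers, hence flat by the previous step, and flatness is preserved upon restricting to the open affine chart. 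The subtlety being circumvented here is that scheme-theoretic closure does not commute with base change; homogenizing to the fixed degree $d$, rather than taking fiberwise projective closures, avoids it precisely because all fibers have the same degree. For a product of hypersurfaces one realizes $X$ as the fiber product over $Y$ of the constituent families $\mathcal H_1,\dots,\mathcal H_k$; each $\mathcal H_i \to Y$ is flat by the single-hypersurface case, and a fiber product over $Y$ of flat $Y$-schemes is flat, so $X \to Y$ is flat. One should also check that the scheme structure induced on the fibers by $X$ is the expected reduced one, which holds once $X$ is reduced and $Y$ is a variety, as in all our applications.

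Finally, (iv) is the combination of \cite[Proposition III.9.5 and Corollary III.9.6]{hartshorne}: for a flat morphism $f$ of finite-type schemes and any $x \in X$ with $y = f(x)$ one has $\dim_x X = \dim_x f^{-1}(y) + \dim_y Y$. If $Y$ is irreducible then $\dim_y Y = \dim Y$ is independent of $y$; assuming $f$ is surjective (or, after replacing $Y$ by the dense Zariski open image $f(X)$, which is open by (i)) every fiber is non-empty, and since in our applications each fiber is equidimensional --- a hypersurface, or a product of hypersurfaces --- the local fiber dimension $\dim_x f^{-1}(y)$ equals the common value $\dim f^{-1}(y)$. The displayed equality then shows simultaneously that $\dim f^{-1}(y)$ is constant in $y$ and that every irreducible component of $X$, being a component through some point $x$, has dimension $\dim Y + \dim f^{-1}(y)$.
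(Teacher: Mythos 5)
The paper does not actually prove this proposition; it is stated as background with a pointer to \cite[Section III.9]{hartshorne}, which is exactly the source your sketch assembles, so your approach coincides with the paper's intent. Your gluing of the standard facts is correct: (i) is the openness of flat finite-type morphisms plus locality of flatness on the source, (ii) is base change of the flat structure morphism, (iii) follows from the constancy of the Hilbert polynomial of a degree-$d$ hypersurface via \cite[Theorem III.9.9]{hartshorne} together with stability of flatness under fiber products over the base, and (iv) is \cite[Corollary III.9.6]{hartshorne}. The only hypotheses worth making explicit are that the Hilbert-polynomial criterion requires an integral base --- satisfied in every application here, since $\Pro(U^1_M)$, $\Pro(V^2_M)$ and $\Pro(V^1_M\times V^1_M)$ are irreducible --- and that (iv) as literally stated presumes $f$ surjective with equidimensional fibers, both of which you correctly flag and both of which hold wherever the paper invokes the result.
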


\section{Reduction to the case of overlapping frames}
\subsection{Ordered frames}
A $N$-element ordered frame in $\K^M$ is a collection over $N$-vectors $\UU = (\alpha_1, \ldots , \alpha_N)$
that span $\K^M$. Given a frame $\UU$ the {\em analysis operator} of $\UU$ is
the linear map $L_\UU \colon \K^M \to \K^N$ defined by $y \mapsto (\langle x, \alpha_1 \rangle, \ldots , \langle x, \alpha_N\rangle)$. To give an $N$-element frame $\UU$ in $\K^M$ is equivalent to giving a
a full rank $M \times N$ matrix $A_\UU = \begin{pmatrix} \alpha_{1,1} & \ldots & \alpha_{1,N}\\ \alpha_{2,1} & \ldots & \alpha_{2,N}\\
\ldots & \ldots & \ldots\\
\alpha_{M,1} & \ldots & \alpha_{M,N}
\end{pmatrix}$
since the column vectors of this matrix are vectors $\UU =(\alpha_1, \ldots , \alpha_N)$ that span 
$\K^M$. Note
that the rows of the matrix $A_\UU$ are an ordered basis for the range of the analysis operator
$L_\UU$ which is an $M$-dimensional linear subspace of $\K^N$. We sometimes denote this subspace
as $\L_{\UU}$.
\subsection{Overlapping frames}
A pair of $N$-element frames 
$\UU=(\alpha_1, \ldots , \alpha_N), \VV= (\beta_1, \ldots , \beta_N)$ is
$s$-overlapping if the ranges of their analysis operators intersect in linear subspace of dimension $s$ 
and there are $s$-element sets of indices $T_1= (i_1, \ldots , i_s)$ and $T_2 = (j_1, \ldots , j_s)$ such
that for each $\ell$, the $i_m$-th entry of $\alpha_\ell$ and the $j_m$-th entry of $\beta_\ell$ agree.
For each pair of $s$-element subsets $T_1, T_2 \subset [1,M]$ the set of $s$-overlapping frames whose vectors agree in the index sets $T_1, T_2$
is parametrized as Zariski open set of $\K^{(2M-s) \times N}$. In particular, the set of $s$-overlapping frames is a union of irreducible varieties each isomorphic to an open set $\K^{(2M-s) \times N}$.

\subsection{Sparsity and overlapping frames}
The set of ordered bases of $\K^N$ is equivalent to the variety $\GL_N(\K)$ of invertible $N \times N$
matrices, since the rows of any invertible matrix form a basis for $\K^N$. To prove Theorem~\ref{thm.realsparse} and 
Theorem~\ref{thm.complexsparse} we will to prove that there is a Zariski open set $U \subset \GL_N(\K)$ such that
for $M$ as given in the statement of the theorems, and every pair of $M$-element subsets $S_1, S_2 \subset [1,N]$ and vectors $x \in \L_{S_1}(\V),y \in \L_{S_1}(\V)$
if $P_x = P_y$ then $x \in t y$ with $t \in S^1 \cap \K$. (Note this necessarily implies that $x, y \in \L_{S_1}(\V) \cap
\L_{S_2}(\V)$.)
Given an ordered basis of $\V=\{v_1, \ldots , v_N\}$ of $\K^N$ and an
an $M$-element subset $S \subset [1,N]$ let $A_S(\V)$ be the $M \times N$
matrix whose rows consist of the vectors $v_{i_1}, \ldots v_{i_M}$ where $i_1 < i_2 < \ldots < i_M$ are the
elements of $S$. The columns of $A_S(\V)$ form a set of $N$ vectors in $\K^M$ which, because $A_S(\V)$ has full rank, are
a frame which we denote $\UU_S$. If the entries of the vector $v_i$ are denoted by $v_i = (v_{i,1}, \ldots , v_{i,N})$
then the $\ell$-th column of $A_S(\V)$ is the vector $\alpha_\ell = (v_{i_1,\ell}, \ldots v_{i_M,\ell})$. 

\begin{example} \label{ex.overlap}
Here is an example to illustrate how we obtain overlapping frames from the choice of two $M$ element subsets
of an ordered basis for $\K^N$. Let $\V$ be the ordered basis $\V$ for $\R^5$  consisting of the row vectors of the invertible matrix
$$A =\left(
\begin{array}{ccccc}
 4 & -7 & 5 & 8 & -3 \\
 -4 & -10 & 8 & 0 & 8 \\
 -1 & 3 & 9 & 10 & 0 \\
 2 & -6 & 1 & 0 & -1 \\
 -2 & -8 & 1 & 3 & 6 \\
\end{array}
\right)$$
Consider the two element subsets $S_1 = \{1,4\}$, $S_2 = \{4,5\}$  which intersect in the 1-element set $\{4\}$. 
Then we have $A_{S_1}(\V)= \begin{pmatrix} 4 & -7 & 5 & 8 & -3\\ 2 & -6 & 1 & 0 & -1 \end{pmatrix}$ which is the matrix whose
columns are the first and fourth entries of the columns of $A$. Likewise,
$A_{S_2}(\V) = \begin{pmatrix}  2 & -6 & 1 & 0 & -1 \\ -2 & -8 & 1 & 3 & 6 \end{pmatrix}$.
The corresponding frames associated to these subsets are $\UU = \{ (4,2), (-7,-6),(5,1), (8,0),(-3,1)\}$ and $\VV = \{(2,-2), (-6,-8), (1,1),(0,3), (-1,6)\}$.
Then $\UU, \VV$ is a pair of 1-overlapping frames with
$T_1 = \{2\}$ and $T_2= \{1\}$ since the second entry of every vector in $\UU$ equals to the first entry of the corresponding vector in 
$\VV$. The range of the analysis operator $L_{\UU}$ is the two dimensional subspace of $\R^5$,  $\L_{\{1,4\}}(\V) = \langle (4,-7,5,8,3), (2,-6,1,0,-1)\rangle$ and the range of the analysis operator $L_{\VV}$
is $\L_{\{4,5\}}(\V) = \langle (2,-6,1,0,-1), (-2,-8,1,3,6) \rangle$. These subspaces intersect in the line in $\R^5$  spanned by the vector $(2,-6,1,0,-1)$.
\end{example}

\subsection{A statement for overlapping frames}
Now suppose $S_1, S_2$ are two $M$-element subsets of $[1,N]$ with $S_1 \cap S_2 = s$.
Let $\UU$ be the frame associated to the matrix $A_{S_1}(\V)$ and $\VV$ the frame associated to the matrix $A_{S_1}(\V)$.
The frames $\UU$, $\VV$ are $s$-overlapping frames as illustrated in Example~\ref{ex.overlap}.
To prove Theorem~\ref{thm.realsparse} and Theorem~\ref{thm.complexsparse} we will prove the following statement about $s$-overlapping frames for $0 \leq s \leq M$.\\

For $M$ in the suitable range and for every $0 \leq s \leq M$ there is a Zariski dense open set in the variety of $s$-overlapping pairs of frames such that if $(\UU, \VV)$ is a pair of overlapping
frames in this open set then:\\

For all $x,y \in \K^M$ (resp. for generic $(x,y) \in \K^M$) $L_{\UU}(x)$ and $L_{\VV}(y)$ have the same power spectrum if and only
if $L_{\UU}(x) = t L_{\VV}(y)$ for some $t \in S^1 \cap \K$. (Here $L_{\UU}$ and $L_{\VV}$ are the analysis operators of the frames $\UU$ and $\VV$ respectively.)\\

\section{Fourier phase retrieval for complex overlapping frames - Proof of Theorem \ref{thm.complexsparse}} \label{complexcase}
We prove Theorem \ref{thm.complexsparse} first
since we can use 
techniques already established in the literature for the frame phase retrieval
problem \cite{conca2015algebraic}.

If $\UU = (\alpha_1, \ldots , \alpha_N)$ is an $N$-element frame on
$\C^M$ represented by an $M \times N$ matrix $A_{\UU}$
let $\UU' = (\alpha'_1,\ldots , \alpha'_N)$
be the frame represented by the matrix $A_{\UU} F$ where $F$ is the $N \times N$ discrete Fourier
transform matrix.
If $x \in \C^M$ then
$|F L_{\UU}(x)|^2 = |L_{\UU'}(x)|^2$ where as usual the absolute value is taken componentwise.
Moreover if $\UU$ and $\VV$ are $s$-overlapping frames then the corresponding frames $\UU'$ and $\VV'$ obtained by applying the discrete Fourier transform
are also $s$-overlapping.
Therefore, Theorem \ref{thm.complexsparse}(i) follows from the following
statement.
\begin{enumerate}
\item[(F)] \label{item.two}
  If $N > 4M-3$ and $\UU = (\alpha_1, \ldots , \alpha_N)$
  and $\VV = (\beta_1, \ldots , \beta_N)$ are generic $s$-overlapping frames with $0 \leq s \leq M$ then 
$|L_{\UU}(v)| \neq |L_{\VV}(w)|$ unless $L_{\UU}(v) = t L_{\VV}(w)$ for some $t \in S^1$.
\end{enumerate}

Likewise, Theorem \ref{thm.complexsparse}(ii) follows from the following statement.
\begin{enumerate}
\item[(Fg)] If $N > 2M-1$ and $\UU,\VV$ are generic $s$-overlapping frames with $0 \leq s \leq M$
then for a generic vector $x \in \C^M$,
  $|L_{\UU}(x)| = |L_{\VV}(y)|$ if and only
  if $L_{\UU}(x) = t L_{\VV}(y)$ for some $t \in S^1$.
\end{enumerate}

\begin{remark}
As noted earlier, (F) can be deduced from
\cite[Theorem 2.2]{wang2014phase}. Also, when $s=M$ and then $\UU = \VV$ and (F) holds with the bound $N > 4M-5$ by~\cite[Theorem 1.1]{conca2015algebraic}. Likewise when $\UU = \VV$ (Fg) appears as
\cite[Theorem 3.3]{balan2006signal}.  
\end{remark}
\subsection{Phase retrieval for pairs of frames}
Let $\UU = (\alpha_1,\ldots \alpha_N)$ and $\VV =(\beta_1, \ldots ,
\beta_N)$ be two $s$-overlapping $N$-element frames on $\C^M$.
After simultaneously reordering the entries of the vectors in $\UU$ and $\VV$ we may assume that that the first $s$ entries of the $\ell$-th vector
in $\UU$ equal to the first $s$ entries of the $\ell$-th vector in $\VV$. In other words, 
$\alpha_{ij} = \beta_{ij}$ for $ 1 \leq i \leq s$ and $1 \leq j \leq N$.
(When $s=0$, we understand $\UU$ and $\VV$ to be arbitrary frames.) We will denote the variety parametrizing such frames as $F(M,N,s)$. It is a Zariski open set of $\K^{(2M-s) \times N}$ parametrizing $(2M-s)\times N$ matrices $A$ such that the columns of the $M \times N$ submatrices $A_1$ and $A_2$ are $N$-element frames in $\K^N$, where $A_1$ consists of the first $M$-rows and $A_2$ consists of the first $s$ and last $M-s$ rows
of $A$.

Consider the map 
$$ \mathcal{A}_{\UU\VV}: (\C^M/S^1) \times (\C^M/S^1) \to \R^N,
(x,y) \mapsto |L_{\UU}(x)|^2 - |L_{\VV}(y)|^2$$
where $L_\UU$ and $L_\VV$ are the analysis operators of the frames $\UU$ and $\VV$ respectively.
 To prove statement (F) we need to show that if $N > 4M-3$ then
    for a generic pair of $s$-overlapping frames $\UU, \VV$,\\
\centerline{(*) $\mathcal{A}_{\UU \VV}(x,y) = 0$ if and only if $x =y$.}

If
we identify $\C^M/S^1$ with the space $\H_M^1$ of rank-one
Hermitian matrices then $\mathcal{A}_{\UU\VV}$ extends (cf. \cite{bandeira2014saving})
to a linear map
$$\mathcal{A}_{\UU\VV}\colon \mathbb{H}_M \times \mathbb{H}_M \to \R^N, (A ,B) = (\alpha_i^* A \alpha_i - \beta_i^* B \beta_i)_{i=1}^{N}$$
where $\H_M$ is the real vector space of $M \times M$ Hermitian matrices. 
Explicitly the map sends a pair $(A,B)$ of $M \times M$ Hermitian matrices to the vector in $\R^N$ whose $i$-th component
    is the scalar $(\alpha_i^* A \alpha_i - \beta_i^* B \beta_i)$. (In this notation we view $\alpha_i, \beta_i$ as column vectors and we note that if $x$ is a column vector and $A = xx^*$ then $\alpha_i^* A \alpha_i = |\langle x, \alpha_i \rangle|^2 $.)

Let ${\mathbb U} \subset \H^1_M \times \H^1_M$ be the open
set of pairs of distinct rank-one Hermitian matrices. The following is immediate from our discussion.
\begin{proposition}\label{cxinjcondition}
  Condition (*) holds if and only if ${\mathbb U}$ does not intersect
  the kernel of the linear map $\mathcal{A}_{\UU \VV} \colon \H_M \times \H_M \to \R^N$.
\end{proposition}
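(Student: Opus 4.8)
The plan is to observe that the two statements are literally the same statement seen through the rank-one lift $\C^M/S^1 \to \H_M$, $x \mapsto xx^*$; once this dictionary is in place there is essentially nothing left to do, which is why the proposition is ``immediate''.

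First I would record the standard properties of the lift. Since $(tx)(tx)^* = xx^*$ for $t \in S^1$, the map $x \mapsto xx^*$ is constant on $S^1$-orbits and descends to a bijection of $\C^M/S^1$ onto the set $\H^1_M$ of rank-one positive-semidefinite Hermitian matrices, with $xx^* = yy^*$ exactly when $x$ and $y$ lie in the same orbit. Moreover $|L_\UU(x)_i|^2 = |\langle x, \overline{\alpha_i}\rangle|^2 = \alpha_i^* (xx^*) \alpha_i$ for every $i$, and likewise for $\VV$, so the quadratic map $\mathcal{A}_{\UU\VV} \colon (\C^M/S^1) \times (\C^M/S^1) \to \R^N$ is the composite of $(x,y) \mapsto (xx^*, yy^*)$ with the linear map $\mathcal{A}_{\UU\VV} \colon \H_M \times \H_M \to \R^N$. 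In particular $\mathcal{A}_{\UU\VV}(x,y) = 0$ for the quadratic map if and only if $(xx^*, yy^*) \in \ker \mathcal{A}_{\UU\VV}$ for the linear map.

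Next I would match up the ``bad'' loci. The assignment $(x,y) \mapsto (xx^*, yy^*)$ is a bijection of $(\C^M/S^1) \times (\C^M/S^1)$ onto $\H^1_M \times \H^1_M$; it carries the pairs with $x = y$ onto the diagonal $\{(A,A)\}$ and hence the pairs with $x \neq y$ onto $\mathbb{U}$, the set of pairs of distinct rank-one Hermitian matrices. Combining this with the previous paragraph, a pair $(x,y)$ with $x \neq y$ satisfies $\mathcal{A}_{\UU\VV}(x,y) = 0$ if and only if $(xx^*,yy^*)$ lies in $\mathbb{U} \cap \ker \mathcal{A}_{\UU\VV}$. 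Hence condition (*) --- the assertion that $\mathcal{A}_{\UU\VV}(x,y) = 0$ forces $x = y$, the reverse implication being built into the definitions --- holds if and only if $\mathbb{U} \cap \ker \mathcal{A}_{\UU\VV} = \emptyset$, which is the proposition.

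I do not expect any genuine obstacle here: the only items requiring (routine) verification are the properties of the lift $x \mapsto xx^*$ listed above and the componentwise identity $|L_\UU(x)_i|^2 = \alpha_i^*(xx^*)\alpha_i$, which amounts only to fixing the inner-product convention and is already implicit in the displayed definition of $\mathcal{A}_{\UU\VV}$ on $\H_M \times \H_M$.
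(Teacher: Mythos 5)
Your proof is correct and matches the paper's approach: the paper declares the proposition ``immediate from our discussion,'' and your argument simply spells out that discussion --- the identification of $\C^M/S^1$ with $\H^1_M$ via $x \mapsto xx^*$, the factorization of the quadratic map through the linear one, and the matching of distinct pairs with $\mathbb{U}$. The only (harmless) imprecision, inherited from the paper's own phrasing of (*), is calling the reverse implication definitional, which is not literally true when $\UU \neq \VV$; the substantive content is the forward direction, which you handle correctly.
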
  
Thus, to prove statement (F) or, equivalently, Theorem \ref{thm.complexsparse}(i)
we must prove the following result.
\begin{theorem}\label{genericpairsab}
If $N > 4M-3$ and $\UU$ and $\VV$ are a generic pair of $s$-overlapping frames then the set ${\mathbb U}$ does not intersect the kernel
  of the linear map $\mathcal{A}_{\UU\VV} \colon \H_M \times \H_M \to \R^N$
\end{theorem}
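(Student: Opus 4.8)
The strategy is a dimension count on an incidence variety, following the method of Conca–Edidin–Friedland–Wang \cite{conca2015algebraic}. Fix $M$, $N$ with $N > 4M-3$, and an overlap parameter $s \in [0,M]$. The space of pairs of $s$-overlapping frames is an irreducible quasi-affine variety $\mathcal{F} = F(M,N,s)$ of $R \times N$ matrices with $R = 2M-s$ (the first $M$ and last $M$ rows independent); its dimension is $RN$. For each such pair $(\UU,\VV)$ the linear map $\mathcal{A}_{\UU\VV}\colon \H_M \times \H_M \to \R^N$ is defined, and by Proposition \ref{cxinjcondition} statement (F) fails for $(\UU,\VV)$ exactly when $\ker \mathcal{A}_{\UU\VV}$ meets the open set $\mathbb{U}$ of pairs of distinct rank-one Hermitian matrices. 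So it suffices to show that the set of $(\UU,\VV) \in \mathcal{F}$ for which this bad intersection occurs is contained in a proper Zariski-closed subset; intersecting the resulting finitely many open conditions over all $s$ then proves the theorem.

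First I would set up the incidence variety
\[
  Z = \{\, (\UU,\VV,A,B) \in \mathcal{F} \times (\H^1_M \times \H^1_M) \;:\; A \neq B,\ \mathcal{A}_{\UU\VV}(A,B) = 0 \,\},
\]
and study the two projections $p\colon Z \to \mathcal{F}$ and $q\colon Z \to \H^1_M \times \H^1_M$. The bad locus in $\mathcal{F}$ is exactly $p(Z)$ (or rather $\overline{p(Z)}$), so it is enough to prove $\dim Z < \dim \mathcal{F} = RN$, since then $p(Z)$ cannot be dense. To bound $\dim Z$ I would instead bound the fibers of $q$: fix a pair $(A,B)$ of distinct rank-one Hermitian matrices. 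Up to the group action one may normalize $A, B$; the salient point is that $A - B$, or more precisely the behavior of the quadratic forms $v \mapsto v^* A v$ and $v \mapsto v^* B v$, is not identically equal, so the vector of differences $(\alpha_i^* A \alpha_i - \beta_i^* B \beta_i)_i$ being zero imposes genuinely nontrivial conditions. One must count, for each frame index $i$, the codimension of the set of pairs $(\alpha_i,\beta_i)$ (for the overlapping indices $\alpha_i = \beta_i$, a single vector) satisfying $\alpha_i^* A \alpha_i = \beta_i^* B \beta_i$. This is one real equation per index — a real hypersurface — so naively each of the $N$ indices cuts the fiber dimension down by $1$, giving $\dim q^{-1}(A,B) \le RN - N$, hence $\dim Z \le RN - N + \dim(\H^1_M \times \H^1_M) = RN - N + 2(2M-1)$. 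The inequality $RN - N + 2(2M-1) < RN$ is exactly $N > 4M - 2$; I expect the sharper bound $N > 4M-3$ to come from a more careful analysis at the overlapping indices, where $A \ne B$ forces the common equation $\alpha_i^*(A-B)\alpha_i = 0$ to be a \emph{proper} hypersurface (nonzero quadratic form) precisely when $A \ne B$, and from using that at least one index genuinely sees the difference, shaving off the needed extra unit; alternatively one tracks the $S^1$-quotient dimensions honestly, replacing $2(2M-1)$ by $2(2M-1) - (\text{something})$.

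The main obstacle — and the place where genericity and the flatness machinery of Proposition \ref{prop.flatness} really earn their keep — is making the per-index codimension count rigorous and \emph{uniform} over the fiber base. The naive "one equation, drop dimension by one" step can fail if the equation $\alpha_i^* A \alpha_i = \beta_i^* B \beta_i$ degenerates to $0 = 0$ for the given $(A,B)$; this is where the hypothesis $A \ne B$ is essential, and it must be combined with the fact that a generic frame vector (or pair) is not constrained to lie in any proper subspace. I would handle this by stratifying $\H^1_M \times \H^1_M$ according to the rank of $A - B$ and the intersection pattern of their ranges, showing that on each stratum the map $q$ restricted to $Z$ has fibers of controlled dimension, and that the total dimension contribution of each stratum stays below $RN$; the worst stratum is where $A$ and $B$ share a common eigen-line, which must be analyzed by hand. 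Once $\dim Z < RN$ is established, Proposition \ref{prop.vakil} (or simply the fact that a morphism cannot dominate a variety of strictly larger dimension) gives that $\overline{p(Z)} \subsetneq \mathcal{F}$, and its open complement is the desired Zariski-open set of good frame pairs; taking the finite intersection over $s \in [0,M]$ completes the proof.
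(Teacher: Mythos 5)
Your architecture is the same as the paper's: an incidence variety over the space of $s$-overlapping frame pairs, a fiber-dimension count over the $(A,B)$-side, and the conclusion that the projection to the frame side cannot dominate. The key degeneracy issue you flag at the overlapping indices is also handled exactly as in the paper: for those indices the single equation becomes $\alpha_i^*(A-B)\alpha_i=0$, and one needs the (complexified) quadratic form attached to $A-B\neq 0$ to be not identically zero; the paper quotes \cite[Lemma 3.5]{conca2015algebraic} for this, and then gets uniformity of the fiber dimension for free because each fiber is a \emph{product} of hypersurfaces in disjoint groups of variables (whence flatness, Proposition \ref{prop.flatness}(iii)--(iv)). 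Your proposed stratification by the rank of $A-B$ and by shared eigenlines is therefore unnecessary.

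The genuine gap is quantitative: your count, carried out as written, proves the theorem only for $N>4M-2$, one worse than the claimed $N>4M-3$, and neither of the remedies you suggest supplies the missing unit. A ``more careful analysis at the overlapping indices'' cannot help --- each index already contributes codimension exactly one in your count, and there is nothing more to extract there; and the $S^1$-quotient is already accounted for in $\dim\H^1_M=2M-1$, so ``tracking it honestly'' changes nothing. The correct source of the extra unit is that the defining equations $\alpha_i^*A\alpha_i=\beta_i^*B\beta_i$ are \emph{homogeneous of degree one in the pair $(A,B)$}, so the incidence variety is a cone under the simultaneous scaling $(A,B)\mapsto(tA,tB)$ (which preserves rank-one Hermitian pairs). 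One must therefore projectivize the $(A,B)$-factor, replacing the base of your fibration $q$ by $\Pro(\H^1_M\times\H^1_M)$ of dimension $4M-3$ rather than $4M-2$; this is exactly what the paper does by working in $\Pro(\hermdecomp{C}\times\hermdecomp{C})$, and it converts your inequality $N>4M-2$ into the required $N>4M-3$. (Projectivizing the frame side as well, as the paper also does, subtracts one from both $\dim Z$ and $\dim\mathcal F$ and so is cosmetic.) A secondary caution: the paper complexifies before counting dimensions, so that ``nonzero polynomial $\Rightarrow$ codimension one'' and the flatness argument are on solid ground, and only restricts to real points at the very end via $\dim_\R X_\R\le\dim_\C X$; if you insist on working with real dimensions throughout, you must justify the product-of-real-hypersurfaces count directly.
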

Likewise, statement (Fg), or equivalently, Theorem \ref{thm.complexsparse}(ii) follows
from the following result.
\begin{theorem}\label{generica}
  Suppose $N > 2M-1$ and $\UU,\VV$ are generic pair of $s$-overlapping frames.
If $A \in \H_M^1$ is generic there is no 
$ B \neq  A  \in \mathbb{H}_M^1$ such that $\alpha_i^*A \alpha_i =
\beta_i^*B \beta_i$ for $i = 1, \ldots N$.
\end{theorem}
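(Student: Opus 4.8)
The plan is to recast the statement as a dimension count on an incidence variety, exactly in the spirit of \cite{conca2015algebraic} but carried out for a pair of overlapping frames rather than a single frame. Consider the variety
$$
Z = \{(A, B, \UU, \VV) : A, B \in \H_M^1,\ A \neq B,\ \alpha_i^* A \alpha_i = \beta_i^* B \beta_i \text{ for } i = 1,\dots,N\},
$$
sitting inside $\H_M^1 \times \H_M^1 \times F(M,N,s)$, where $F(M,N,s)$ is the quasi-affine variety of $s$-overlapping pairs of frames introduced in Section \ref{sec.secondmoment} (of dimension $2MN - sM$ over $\R$, since the $s$ shared rows of length $N$ are counted once). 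Let $p$ be the projection of $Z$ to $F(M,N,s)$ and $q$ the projection to $\H_M^1 \times \H_M^1$. The statement to be proved is that for generic $(\UU,\VV)$ the fiber $p^{-1}(\UU,\VV)$ maps into the small diagonal under $(A,B)\mapsto$ ``$A = B$'', i.e. that the locus where $A\neq B$ does not dominate $F(M,N,s)$; equivalently $\dim Z < \dim F(M,N,s)$, since then $p$ is not dominant and its image misses a nonempty Zariski open set by Proposition \ref{prop.vakil}. (One must work over $\C$ to use irreducibility and the dimension theorems cleanly, then pass to real points via $X_\R$; the overlap condition and the rank-one conditions are all defined by real equations, so this is harmless.)

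The key step is to bound $\dim Z$ by fibering over $q$. Fix a pair of distinct rank-one Hermitian matrices $A = aa^*$, $B = bb^*$ with $a \not\sim b$; this choice lives in a space of dimension $2(2M-1)$ over $\R$. The fiber $q^{-1}(A,B)$ is the set of $s$-overlapping frame pairs $(\UU,\VV)$ satisfying the $N$ real equations $\alpha_i^* A \alpha_i = \beta_i^* B \beta_i$. For the $N - s$ non-overlapping indices, each equation involves an independent pair $(\alpha_i,\beta_i) \in \C^M \times \C^M$, and because $A \neq B$ the quadratic form $z \mapsto z^* A z - z^* B z$ on $\C^M\times\C^M$ (in the variable $(\alpha_i,\beta_i)$) is not identically zero, hence cuts out a hypersurface of real codimension $1$ in $\R^{4M}$; so these $N-s$ coordinates contribute $(N-s)(4M) - (N-s)$. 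For the $s$ overlapping indices the vector $\gamma_i := \alpha_i = \beta_i$ must satisfy $\gamma_i^*(A - B)\gamma_i = 0$, which is again one real condition on $\C^M = \R^{2M}$ provided $A \neq B$, contributing $s\,(2M) - s$. Summing, for this generic choice of $(A,B)$,
$$
\dim q^{-1}(A,B) \leq (N-s)(4M-1) + s(2M-1),
$$
and therefore, adding back the dimension of the base of $q$,
$$
\dim Z \leq 2(2M-1) + (N-s)(4M-1) + s(2M-1) = \dim F(M,N,s) - \big(N - (4M-2)\big) + \text{(bookkeeping)}.
$$
A careful recount of $\dim F(M,N,s) = (N-s)(2M) + s\,M$ against the right-hand side shows the difference is $N - (2M-1)$ up to the correction coming from the $S^1 \times S^1$ scaling freedom already quotiented in $\H_M^1 \times \H_M^1$; so $\dim Z < \dim F(M,N,s)$ exactly when $N > 2M - 1$, which is the hypothesis. (The bookkeeping constants must be matched precisely; the genericity of $(A,B)$ is what makes every one of the $N$ conditions genuinely codimension one, and this is where irreducibility of $\H_M^1$ — it is the image of $\C^M\setminus\{0\}$, hence irreducible — is used so that ``the generic fiber has the expected dimension'' applies.)

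The remaining point, and the one I expect to be the main obstacle, is the upper semicontinuity/irreducibility argument needed to conclude from a bound on the dimension of \emph{one} fiber of $q$ to a bound on $\dim Z$. The fibers of $q$ are not all of the same dimension — over the diagonal $A = B$ they jump — so one cannot apply Proposition \ref{prop.vakil} to $q$ directly. Instead I would restrict $q$ to the open subset $\mathbb{U} \subset \H_M^1\times\H_M^1$ of \emph{distinct} pairs (which is irreducible, being open dense in an irreducible variety), show that $q^{-1}(\mathbb{U})$ is irreducible or at least that each of its components has dimension $\leq \dim\mathbb{U} + \max_{(A,B)\in\mathbb{U}}\dim q^{-1}(A,B)$, and then observe that the overlap/rank conditions defining the non-overlapping and overlapping equations are, after fixing $(A,B)$, given by explicit nondegenerate quadrics, so Proposition \ref{prop.flatness}(iii) applies fiberwise and pins the fiber dimension of $q$ exactly rather than merely bounding it. Then $p = (\text{projection to } F(M,N,s)) \circ (\text{inclusion of } q^{-1}(\mathbb{U}))$ has image of dimension $\leq \dim Z < \dim F(M,N,s)$, so $p$ is not dominant and a generic overlapping pair $(\UU,\VV)$ lies outside $p(Z)$, i.e. admits no $B \neq A$ with $A$ itself generic — after one final swap of the roles of ``generic in the frame space'' and ``generic in $\H_M^1$'' using that $p$ restricted to an appropriate locus is flat. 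This last transposition, converting ``for generic $(\UU,\VV)$ there is no bad $A$'' into ``for generic $(\UU,\VV)$ and generic $A$ there is no bad $B$,'' is routine once the incidence variety is set up correctly, but it is the step most prone to off-by-one errors in the inequality $N > 2M-1$, so it deserves care.
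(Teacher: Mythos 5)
Your strategy has a genuine structural gap: you aim to show $\dim Z < \dim F(M,N,s)$, so that the projection $p$ to the frame space is not dominant and generic frames admit \emph{no} bad pair $(A,B)$ at all. But that is the content of Theorem \ref{genericpairsab} (statement (F)), and the dimension count only delivers it when $N > 4M-3$, not $N > 2M-1$. Indeed, with the corrected base dimension $\dim \Lambda_s = 4MN - 2sN$ (real; a pair of complex $M\times N$ frame matrices sharing their first $s$ rows — note the overlap is in coordinates of the frame vectors, not in $s$ of the $N$ vectors as your fiber computation assumes) and the fiber dimension $N(4M-2s-1)$ over a point of the $(4M-2)$-dimensional base $\mathbb{U}$, one gets $\dim Z - \dim \Lambda_s = 4M - N - 2$, which is negative only for $N > 4M-2$ (projectively, $N>4M-3$). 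Your claimed difference $N-(2M-1)$ does not follow from your own numbers, and the $S^1\times S^1$ quotient (two real dimensions) cannot bridge the gap. The root of the problem is a misreading of what must be proved: Theorem \ref{generica} does \emph{not} assert that the locus $A\neq B$ fails to dominate the frame space; for $2M-1 < N \leq 4M-3$ the projection $p$ may well be dominant, so generic frames do admit bad pairs — the claim is only that the set of $A$'s occurring in such pairs is a proper subvariety of $\H^1_M$.

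The missing idea is a second projection. When $p=\pi_1$ is dominant, apply Proposition \ref{prop.vakil} to conclude that over a generic frame pair $q$ the fiber $\Gamma=\pi_1^{-1}(q)$ has dimension $\dim Z - \dim \Pro(\Lambda_s) = 4M-N-3$ (projectively). Then project $\Gamma$ to the \emph{first} matrix factor, $p_1\colon \Gamma \to \Pro(\M^1_M)$, and observe that $\overline{p_1(\Gamma)}$ is a proper subvariety of $\Pro(\M^1_M)$ precisely when $4M-N-3 < \dim\Pro(\M^1_M) = 2M-2$, i.e.\ when $N>2M-1$. That is where the stated bound comes from, and it is exactly the "swap of generic in the frame space and generic in $\H^1_M$" that you defer to the end as routine; it is in fact the crux of the proof and requires the fiber-dimension statement for $\pi_1$, not a non-dominance statement. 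Your computation of $\dim Z$ by fibering over $q=\pi_2$ (surjectivity onto pairs of distinct rank-one matrices, fibers that are products of quadrics, flatness to pin the component dimensions) does match the paper's Proposition \ref{dimbmn} in outline, so the first half of your plan is sound once the overlap convention and the real dimension counts are fixed; it is the concluding step that must be replaced.
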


\begin{remark} If the overlap is $s < M$ then for generic $A$,
  $(\alpha_i^* A \alpha_i)_{i=1}^N \neq
  (\beta_i^* A \beta_i)_{i=1}^N$ so the assumption that
  $ B  \neq  A $ is superfluous.  Translated
  back into our original setup, Theorem \ref{generica} tells us that
  if $N> 2M-1$ and $S \subset [1,N]$ with $|S| = M$ and $x \in
  \L_S(\V)$ is generic, then any vector in $\L_S(\V)$ with the same power
  spectrum must be obtained from $x$ by multiplication by a global
  phase. Likewise, given two distinct $M$-element subsets $S_1, S_2 \subset [1,N]$
  and a generic signal $x \in
  \mathbb{L}_{S_1}(\V)$, there is no $y \in \mathbb{L}_{S_2}(\V)\setminus \L_{S_1}(\V)$
  with the same power spectrum as $x$.
\end{remark}

\subsection{Proof of Theorems \ref{genericpairsab}, \ref{generica}} 
  
Recall that $F(M,N,s)$ denotes the set of $N$-element $s$-overlapping frames
of vectors in $\C^M$ whose first $s$ coordinates match.
If $(\UU, \VV) \in F(M,N,s)$ we 
can decompose the corresponding $M \times N$ matrices $A_\UU$ and $A_\VV$ in terms of their real and imaginary parts
as  $A_\UU = U+iU'$ and $A_\VV = V+iV'$ .
Since the first $s$-rows of $A_\UU$ and $A_\VV$ are equal,
the set of such frames depends on $4NM -2sN$ real parameters.
Let $\Lambda_s \subset (\C^{M \times N})^4$ be the parameter space
of four-tuples of complex $M\times N$ matrices $(U,U', V, V')$ such that
the first $s$ rows of $U$ and $V$ are equal and the first $s$ rows of $U'$ and $V'$ are equal.
By construction $\Lambda_s$ is a complex linear subspace of $(\C^{M \times N})^4$ of dimension
$4MN - 2sN$ and the set of $s$-overlapping frames is the real Zariski open
set of $\Lambda_s(\R)$ corresponding to tuples $(U,U', V, V')$ such that
$\UU = U + iU', \VV = V + iV'$ have full rank and the intersection of the
ranges of $\UU$ and $\VV$ has dimension exactly $s$.

Following \cite{conca2015algebraic} we define the following incidence relation.
\begin{definition}\label{defbmn}
  Let $\b_{M,N,s}$ denote the subset of
  $\Pro(\Lambda_s) \times \Pro(\hermdecomp{C} \times \hermdecomp{C})$ consisting of tuples of matrices $([U,U', V,V'], [A,A',B,B'])$ for which
	\begin{equation*}
	\begin{split}
	& (1) A + i A' \neq B + iB'\\
	&(2) \ \rank(A+iA')=\rank(B+iB') = 1\\
	&(3) \  u_n^T Au_n + {u'}_n^TA{u'}_n - 2u_n^TA'{u'}_n = v_n^T B v_n + {v'}_n^TB{v'}_n - 2v_n^TB'{v'}_n \qquad \text{ for all } 1\leq n \leq N
	\end{split}    
	\end{equation*}
\end{definition}
We immediately see that $\b_{M,N,s}$ is a quasi-projective subvariety
of $$\Pro(\Lambda_s) \times \Pro(\hermdecomp{C} \times \hermdecomp{C})$$
since constraints $(2)$ and $(3)$ are defined by the vanishing of
homogeneous polynomials; whereas constraint $(1)$ is an open
condition. Let $\pi_1$ be the projection onto $\Pro(\Lambda_s)$ and
$\pi_2$ the projection to $\Pro(\hermdecomp{C} \times \hermdecomp{C})$
\begin{proposition} \label{injectiveiff}
  Given a pair of $s$-overlapping frames $\UU = U+iU'$, $\VV = V + iV'$, 
  $\ker \mathcal{A}_{\UU\VV}$  does not contain any pairs of distinct rank-one Hermitian matrices
  if and only if $(U,U',V,V')$ is not in the projection $\pi_1((\b_{M,N,s})_\R)$.
\end{proposition}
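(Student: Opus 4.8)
The plan is to unwind Definition \ref{defbmn} and check that its three constraints are precisely the three conditions describing a pair of distinct rank-one Hermitian matrices lying in $\ker \mathcal{A}_{\UU\VV}$. First I would record the identity translating constraint (3): write the $n$-th columns of $\UU$ and $\VV$ as $\alpha_n = u_n + i{u'}_n$ and $\beta_n = v_n + i{v'}_n$ with $u_n,{u'}_n,v_n,{v'}_n \in \R^M$, and write a Hermitian matrix as $A+iA'$ with $A$ real symmetric and $A'$ real skew-symmetric. Expanding $\alpha_n^{*}(A+iA')\alpha_n$ and using $u^{T}A'u = 0$ (for $A'$ skew-symmetric) and $u^{T}Aw = w^{T}Au$ (for $A$ symmetric), one finds that the imaginary part vanishes — as it must, $A+iA'$ being Hermitian — and that
$$\alpha_n^{*}(A+iA')\alpha_n = u_n^{T}Au_n + {u'}_n^{T}A{u'}_n - 2\,u_n^{T}A'{u'}_n.$$
Hence constraint (3) is verbatim the assertion that $\alpha_n^{*}(A+iA')\alpha_n = \beta_n^{*}(B+iB')\beta_n$ for all $n$, i.e.\ that the pair $(A+iA',B+iB')$ lies in $\ker \mathcal{A}_{\UU\VV}$; constraint (2) says both matrices have rank one, and constraint (1) says they are distinct.

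Next I would handle the passage between the linear-algebraic kernel and the projective variety $\b_{M,N,s}$. Since $\mathcal{A}_{\UU\VV}$ is $\R$-linear, $\ker \mathcal{A}_{\UU\VV}$ is a linear subspace, hence stable under rescaling a pair by a nonzero real scalar; rescaling also preserves the rank-one condition and distinctness. As a rank-one matrix is nonzero, any pair of distinct rank-one Hermitian matrices $(A+iA',B+iB')$ determines a nonzero tuple $(A,A',B,B')$, hence a well-defined point of $\Pro(\hermdecomp{C}\times\hermdecomp{C})$, and conversely every representative of such a projective point satisfying (1)--(3) is such a pair. On the other side, $\UU$ and $\VV$ have full rank, so $(U,U',V,V')$ is nonzero, and the first $s$ rows of $\UU$ and $\VV$ agree, so $(U,U',V,V') \in \Lambda_s$; thus $[U,U',V,V']$ is a well-defined point of $\Pro(\Lambda_s)$.

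With these identifications the equivalence is immediate. If $\ker \mathcal{A}_{\UU\VV}$ contains distinct rank-one Hermitian matrices $A+iA'$ and $B+iB'$, then $\big([U,U',V,V'],[A,A',B,B']\big)$ satisfies constraints (1)--(3), hence is a real point of $\b_{M,N,s}$ lying over $[U,U',V,V']$, so $(U,U',V,V') \in \pi_1((\b_{M,N,s})_\R)$. Conversely, a real point of $\b_{M,N,s}$ over $[U,U',V,V']$ produces, via the three constraints, exactly such a pair in $\ker \mathcal{A}_{\UU\VV}$; taking contrapositives yields the proposition. I do not expect a genuine obstacle: this is a bookkeeping statement. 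The only points needing care are getting the coefficient $2$ and the sign on the skew-symmetric term right in the real/imaginary-part expansion underlying constraint (3), and verifying that projectivization loses nothing — i.e.\ that the kernel, the rank-one cone, the distinctness locus, and the space of full-rank $s$-overlapping tuples inside $\Lambda_s$ are all invariant under nonzero real scaling and avoid the zero tuple.
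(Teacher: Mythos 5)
Your proof is correct and takes essentially the same route as the paper's: the paper forms the incidence correspondence of frames and distinct rank-one Hermitian pairs in the kernel, identifies it $\R$-linearly with the real incidence correspondence cut out by constraints (1)--(3), and observes that its projectivization is $(\b_{M,N,s})_\R$. You merely make explicit the identity $\alpha_n^{*}(A+iA')\alpha_n = u_n^{T}Au_n + {u'}_n^{T}A{u'}_n - 2u_n^{T}A'{u'}_n$ and the scaling-invariance bookkeeping that the paper leaves implicit.
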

\begin{proof}
Consider the incidence correspondence
$$I = \{(\UU,\VV, \mathbf{A} ,\mathbf{B}) \ | \ \mathbf{A} \neq \mathbf{B} , \ \rank( \mathbf{A} ) =
\rank(\mathbf{B} ) = 1, \ \alpha_n^* \mathbf{A} \alpha_n = \beta_n^* \mathbf{B} \beta_n\} \subset
F(M,N,s) \times \mathbb{U}.$$
The image of $I$ under the projection $I \to
F(M,N,s)$ is the set of $s$-overlapping frames for which there exists
a pair distinct rank one Hermitian matrices $(\mathbf{A},\mathbf{B})$
lying in $\ker \mathcal{A}_{\UU\VV}$.
If we write
$\UU = U+iU'$, $\VV=V+iV'$, $ \mathbf{A} = A+iA'$ and $ \mathbf{B} = B+iB'$ with $A,B$
symmetric and $A',B'$ skew-symmetric then $I$ is linearly isomorphic
over $\R$ to the the real incidence correspondence
	$$\mathcal{I} = \{(U,U',V,V', A,A',B,B')| (1)-(3) \text{ hold}\}$$
seen as a subset of the real vector space
$\Lambda_s(\R) \times  \hermdecomp{R}\times \hermdecomp{R}$.
Since $\Pro(\mathcal{I}) = \b_{M,N,s}(\R)$ 
the proposition follows.
\end{proof}

Now we compute the dimension of $\b_{M,N,s}$, which will bound the dimension of $\pi_1(\b_{M,N,s})$. To this end, we begin with a lemma.
\begin{lemma}\label{dim}
  Let $ A ,  B  \in \C^{M\times M}$ be
  distinct matrices.
  The algebraic subset of $\C^{4M}$ defined by the ideal 
in $\C[u_1, \ldots , u_M, u'_1, \ldots , u'_M, v_1, \ldots , v_M, v'_1, \ldots , v'_M]$ generated by the following linear and quadratic polynomials.
  \begin{enumerate}
  \item[(1)] $u_n - v_n$, $u'_n - v'_n$ for $n =1, \ldots , s \leq M$.
  \item[(2)] $q(u,u',v,v') = (u-iu')^T  A  (u+iu') - (v-iv')^T B (v+iv')$,
    where $u=(u_1, \ldots , u_M)$, etc.
  \end{enumerate}
has dimension exactly $4M-2s-1$.
\end{lemma}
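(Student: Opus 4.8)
The plan is to analyze the affine variety $Z \subset \C^{4M}$ cut out by the $2s$ linear equations in (1) together with the single quadratic $q$ in (2), and to show $\dim Z = 4M - 2s - 1$. First I would dispose of the linear constraints: the equations $u_n = v_n$, $u'_n = v'_n$ for $n \le s$ define a linear subspace $W \cong \C^{4M-2s}$, and on $W$ we may use coordinates $(u_1,\dots,u_M,u'_1,\dots,u'_M, v_{s+1},\dots,v_M, v'_{s+1},\dots,v'_M)$. So it suffices to show that the restriction $q|_W$ is a \emph{nonzero} polynomial, since the vanishing locus of a nonzero polynomial in an affine space of dimension $4M-2s$ is a hypersurface, hence of pure dimension $4M-2s-1$ (using the fact cited in Section 2 that a hypersurface in $\K^n$ has dimension $n-1$). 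Note that $q$ is at most quadratic, so $q|_W$ is either identically zero or defines a genuine hypersurface; there is no intermediate case to worry about.

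The substantive step is therefore to verify that $q|_W \not\equiv 0$ whenever $A \neq B$. I would argue by contradiction: suppose $q$ vanishes identically on $W$. Write $q(u,u',v,v') = (u-iu')^T A (u+iu') - (v-iv')^T B (v+iv')$. Specializing $v = v' = 0$ outside the overlap and also keeping $u,u'$ free, the first term $(u - iu')^T A (u+iu')$ must vanish identically as a function of $(u,u')\in\C^{2M}$ when $s < M$; but setting $u' = 0$ gives $u^T A u \equiv 0$, forcing the symmetric part of $A$ to vanish, and then examining the cross terms forces $A = 0$ entirely — and symmetrically $B = 0$, contradicting $A \neq B$. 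The delicate case is $s = M$, where $W$ is forced to have $u = v$, $u' = v'$ throughout, so $q|_W = (u-iu')^T(A-B)(u+iu')$; here I would introduce the change of variables $z = u + iu'$, $\bar z$-like variable $w = u - iu'$ — but since $u, u'$ are \emph{independent complex} variables, $z$ and $w$ are in fact algebraically independent, ranging over all of $\C^M \times \C^M$ — so $q|_W = w^T (A-B) z$, which is the bilinear form attached to the nonzero matrix $A - B$, hence not identically zero. (When $s = M$ this gives $\dim Z = 2M - 1 = 4M - 2s - 1$, consistent with the claim.) For $0 \le s < M$ one packages these observations: after the linear reduction, $q|_W$ contains the monomials coming from $u^T A u$ which cannot be cancelled by any term involving the $v$-variables since those are separate coordinates on $W$, so $q|_W \equiv 0$ would force $A = 0$, and a parallel specialization forces $B = 0$.

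The main obstacle I anticipate is bookkeeping around the split into real-symmetric and skew parts versus the complexified picture: one must be careful that ``$A \neq B$'' as complex matrices (equivalently $(A,A') \neq (B,B')$ with $A,B$ symmetric and $A',B'$ skew, as in Definition \ref{defbmn}) genuinely implies the bilinear form $w \mapsto (w, z) \mapsto$ (the relevant expression) is nonzero — and to handle the overlap correctly one should track which of the $4M$ coordinates survive as free parameters on $W$. Once it is established that $q|_W$ is a nonzero polynomial of degree $\le 2$, the dimension count $\dim Z = \dim W - 1 = 4M - 2s - 1$ is immediate, and there is nothing further to prove.
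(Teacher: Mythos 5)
Your overall strategy coincides with the paper's: eliminate the $2s$ linear equations to obtain a subspace $W\cong\C^{4M-2s}$ and show that $q|_W$ is a nonzero (homogeneous quadratic, hence nonconstant) polynomial, so that its zero locus is a hypersurface of dimension $4M-2s-1$. Your treatment of the extreme case $s=M$ is correct, and is in fact a self-contained proof of the fact the paper outsources to \cite[Lemma 3.5]{conca2015algebraic}: since $u,u'$ are independent \emph{complex} variables, $(w,z)=(u-iu',u+iu')$ is an invertible linear change of coordinates on $\C^{2M}$, so $q|_W=w^T(A-B)z$ is visibly a nonzero polynomial when $A\neq B$.

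However, your argument for $0<s<M$ contains a genuine error. On $W$ the overlap identifications $v_n=u_n$, $v'_n=u'_n$ for $n\le s$ mean that the $B$-term also contributes monomials in $u_1,\dots,u_s,u'_1,\dots,u'_s$, and these \emph{can} cancel monomials of the $A$-term. Setting the free coordinates $v_{s+1},\dots,v_M,v'_{s+1},\dots,v'_M$ to zero therefore yields $(u-iu')^TA(u+iu')=(\tilde u-i\tilde u')^TB(\tilde u+i\tilde u')$, where $\tilde u$ denotes the truncation of $u$ to its first $s$ entries --- not the identical vanishing of the $A$-form. Your closing claim that the monomials of $u^TAu$ ``cannot be cancelled by any term involving the $v$-variables'' fails for precisely these overlap coordinates; the contradiction one reaches from $q|_W\equiv0$ is $A=B$ (for instance the coefficient of $w_1z_1$ forces $A_{11}=B_{11}$), not $A=0=B$. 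The gap is easily repaired, and the repair is exactly what the paper does: the $s=M$ subspace $\{v=u,\ v'=u'\}$ is contained in $W$ for every $s\le M$, so the nonvanishing of $q$ on that smaller subspace --- your correct $s=M$ computation --- already implies $q|_W\not\equiv0$ for all $s$. With that one-line reduction in place of your $s<M$ argument, the proof is complete.
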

\begin{proof}
  The $2s$ linear forms (1) are clearly independent and the linear subspace
  defined by these forms has dimension $4M -2s$. To complete
  the proof we need to show that (2) imposes one additional
  condition. Since the effect of the equations (1) is to replace
  the variables $v_n$ (resp. $v'_n$) with $u_n$ (resp. $u'_n$) for $n =1,\ldots
  s$, this is equivalent to showing that for any $s \leq M$,
  the quadratic form $q(u,u',v,v')$ is non-vanishing when
  $v=(u_1, \ldots , u_s, v_{s+1}, \ldots , v_M)$ and $v' = (u'_1,\ldots , u'_s,
  v'_{s+1}, \ldots , v'_M)$. Clearly it is sufficient to show this
  in the case where $s=M$. In other words it suffices to show that
  the quadratic form
  $q(u,u') = (u + i u')( A - B )(u + iu')$ is non-vanishing
  if $ A  -  B  \neq 0$. This follows from \cite[Lemma 3.5]{conca2015algebraic}.
\end{proof}

\begin{proposition}\label{dimbmn}
	The dimension of $\b_{M,N,s} = 4M+4MN-N-2sN-4$.
\end{proposition}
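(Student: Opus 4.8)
The plan is to compute the dimension of $\b_{M,N,s}$ by fibering it over the second projection $\pi_2$ onto (a suitable locus in) $\Pro(\hermdecomp{C} \times \hermdecomp{C})$. First I would identify the locus $W \subset \Pro(\hermdecomp{C}\times\hermdecomp{C})$ of pairs $[A,A',B,B']$ satisfying constraints $(1)$ and $(2)$ of Definition~\ref{defbmn}, i.e. $\mathbf A = A+iA'$ and $\mathbf B = B+iB'$ are distinct rank-one Hermitian matrices. The space of rank-one Hermitian $M\times M$ matrices $\H_M^1 \cong \C^M/S^1$ has real dimension $2M-1$, so projectivizing the cone over pairs of such matrices gives $\dim W = 2(2M-1) - 1 = 4M-3$ (the extra $-1$ from passing to the projective space of the ambient $\Lambda'$-type factor, since scaling $(A,A',B,B')$ jointly is quotiented out). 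One must check $W$ is irreducible: it is the image of the irreducible variety (rank-one Hermitians minus diagonal) under a dominant map, modulo projectivization, so this is routine.

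Next I would restrict $\pi_2$ to $\b_{M,N,s}$ and show it is dominant onto $W$ with fibers of constant dimension. Over a point $[A,A',B,B'] \in W$, the fiber $\pi_2^{-1}([A,A',B,B'])$ consists of those $[U,U',V,V'] \in \Pro(\Lambda_s)$ satisfying the $N$ equations in constraint $(3)$. Each such equation, for fixed $(A,A',B,B')$, is a single quadratic condition on the $n$-th columns $(u_n,u'_n,v_n,v'_n)$; by Lemma~\ref{dim} — applied columnwise, with the linear constraints $(1)$ of that lemma matching the overlap structure of $\Lambda_s$ for the first $s$ rows — each of the $N$ column-tuples lives in a variety of dimension exactly $4M-2s-1$ inside $\C^{4M-2s}$ (the relevant affine space for one column of a $\Lambda_s$-tuple). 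Since the $N$ columns are independent and the constraints decouple across columns, the affine cone over the fiber has dimension $N(4M-2s-1)$, hence $\dim \pi_2^{-1}([A,A',B,B']) = N(4M-2s-1) - 1$ after projectivizing. Crucially Lemma~\ref{dim} guarantees this holds for \emph{every} $[A,A',B,B']$ in $W$ (since there $\mathbf A \neq \mathbf B$), so $\pi_2$ restricted to $\b_{M,N,s}$ has all fibers of this constant dimension, and by Proposition~\ref{prop.flatness}(iii)–(iv) (each fiber being a product of hypersurfaces) it is flat, giving
\[
\dim \b_{M,N,s} = \dim W + \dim(\text{fiber}) = (4M-3) + \big(N(4M-2s-1) - 1\big) = 4M + 4MN - N - 2sN - 4.
\]

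The main obstacle — and the step requiring the most care — is verifying that the columnwise quadratic constraints genuinely impose one condition per column \emph{uniformly over all of $W$}, rather than degenerating for special $(\mathbf A, \mathbf B)$. This is exactly what Lemma~\ref{dim} delivers: as long as $\mathbf A - \mathbf B \neq 0$ the quadratic $q$ is not identically zero, which is ensured on $W$ by constraint $(1)$. One subtlety to track is the bookkeeping of projectivizations: $\b_{M,N,s}$ sits in a \emph{product} of two projective spaces, so I need to be careful that the "$-1$"s are accounted for consistently — I would do this by first computing the dimension of the affine incidence cone $\mathcal I \subset \Lambda_s(\R)\times(\hermdecomp{R}\times\hermdecomp{R})$ as in the proof of Proposition~\ref{injectiveiff}, getting $(4M-2) + N(4M-2s-1) - \text{(nothing, the cone is already the right thing)}$... more precisely, the affine cone over the $W$-factor has dimension $4M-2$ and the total cone has dimension $(4M-2) + N(4M-2s-1)$, and then $\b_{M,N,s}$, being the projectivization in the product sense, has dimension $(4M-2) + N(4M-2s-1) - 2 = 4M + 4MN - N - 2sN - 4$, as claimed. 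A secondary check is that the generic point of $\b_{M,N,s}$ actually has $(U,U',V,V')$ corresponding to a genuine $s$-overlapping full-rank frame pair (not a boundary point of $\Lambda_s$), which follows because that condition is Zariski open and nonempty on $\Pro(\Lambda_s)$ and the fiber dimension count shows $\pi_1(\b_{M,N,s})$ meets it.
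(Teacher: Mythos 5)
Your proposal is correct and follows essentially the same route as the paper: fiber over $\pi_2$ onto the locus of pairs of distinct rank-one matrices (projective dimension $4M-3$), use Lemma \ref{dim} columnwise to see that each fiber is the projectivization of a product of $N$ quadric hypersurfaces, of total dimension $N(4M-2s-1)-1$, and conclude by flatness. The one caveat is that $\b_{M,N,s}$ is a \emph{complex} quasi-projective variety, so the base should be described as the projectivized locus of pairs of distinct rank-one complex $M\times M$ matrices under the identification $(A,A')\mapsto A+iA'$ (complex dimension $4M-3$), rather than via the real dimension of $\H_M^1$; the numbers agree, and the paper makes this precise by passing to the linearly isomorphic variety $\b'_{M,N,s}$.
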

\begin{proof}
  Let $\b'_{M,N,s}$ be the quasi-projective subvariety of $\Pro(\Lambda_s) \times
  \Pro(\C^{M\times M} \times \C^{M\times M})$ consisting of tuples of matrices $([U,U',V,V'],[ A ,  B ])$ satisfying 
	\begin{equation*}
	\begin{split}
	&(1) \  A  \neq  B \\
	&(2) \ \rank( A ) = \rank
	( B )=1\\
	&(3) \ (u_n-iu'_n)^T A (u_n+iu'_n) = (v_n-iv'_n)^T B (v_n+iv'_n) \qquad \text{ for all } 0\leq n \leq N-1
	\end{split}    
	\end{equation*}

The varieties $\b_{M,N,s}$ and $\b'_{M,N,s}$ are linearly isomorphic
since we can identify $(\C_{\mathrm{sym}}^{M\times N} \times
\C_{\mathrm{skew}}^{M\times N})^2$ with $(\C^{M\times M})^2$ as noted
in the proof of \cite[Theorem 3.4]{conca2015algebraic}. Thus it
suffices to prove that $\b'_{M,N}$ has the desired dimension. We now
view $\B'_{M,N,s}$ as a subvariety of $\Pro(\Lambda_s) \times \Pro(U^1_M)$
where $U^1_M$ is the quasi-affine variety of pairs of $(A,B)$ of distinct
complex rank-one matrices.

Let $\pi_1$ be the projection onto the
first coordinate and $\pi_2$ be the projection onto the second
coordinate. We will compute the dimension of $\b'_{M,N,s}$ using the
dimension of its projection $\pi_2(\b'_{M,N})$ and the dimension of
its fibers $\pi_2^{-1}([A,B])$ for $[A, B] \in \Pro(U^1_M)$.

\begin{lemma} Any pair of rank $1$
matrices $[(A, B)]$ with $ A \neq B $ belongs to the image of
$\pi_2$; i.e. $\pi_2(\b'_{M,N,s}) = \Pro(U^1_M)$.
\end{lemma}

\begin{proof} If $s=M$ take any non-zero vectors $(u,u')$ satisfying
the quadratic equation $(u + iu')( A - B )(u + iu')^T$ and take $U =
V= (u,\ldots, u)$ and $U' = V' =(u', \ldots , u')$.  If $s< M$ take
any vectors $u,u', v,v' \in \C^M$ such that the first $s$ coordinates
of are zero and which also satsify the quadratic equation  $$(u
+ i u')A(u + i u')^T - (v + iv')B(v + i v')=0.$$ and let $U = (u
\ldots u), U'=(u' \ldots u'), V = (v \ldots v),V' = (v' \ldots v')$
\end{proof}

The set of pairs of distinct rank-one matrices $(A,B)$ 
dimension $2(2M-1)=4M-2$ by
\cite[Prop 12.2]{Harris} and thus we conclude that
$$\dim(\pi_2(\b'_{M,N})) = \dim \Pro(U^1_M) = 4M-3.$$

Now we focus on the fibers of
$\pi_2$.  Fix $[(A, B)] \in \pi_2(\b'_{M,N})$. We know
by Lemma \ref{dim}
that the fiber over a pair $[(A,B)]$ is a product of $N$ degree-two hypersurfaces
in $\Pro(\C^{4M-2s})$. Therefore, 
every fiber of $\pi_2 \colon \b'_{M,N,s} \to U^1_M$ has dimension
$N(4M-2s-1)-1$ and moreover the map $\pi_2 \colon \b'_{M,N,s} \to U^1_M$ is
flat by Proposition \ref{prop.flatness}(iii). Hence 
by Proposition \ref{prop.flatness}(iv) 
we conclude that every irreducible component of $\b'_{M,N,s}$
has dimension $$\dim \Pro(U^1_M) + N(4M -2s -1) -1 = 4MN +4M -N -2sN -4.$$
Therefore, 
$\dim(\b'_{M,N}) = 4M+4MN-N-2sN-4$
since the dimension of a variety is equal to the maximum of the dimensions of its irreducible components. 
\end{proof}

\begin{proof}[Proof of Theorem \ref{genericpairsab}]
By Proposition \ref{injectiveiff} a four-tuple of real $M\times N$
matrices $(U,U',V,V')$ with $\UU=U+iU'$ and $\VV = V+iV'$ for which
$\mathcal{A}_{\UU\VV}$ is not injective gives a point
$[U,U',V,V'] \in \pi_1((\b_{M,N,s})_\R)\subset
(\pi_1(\b_{M,N,s}))_{\R}$. Since the dimension of this projection is
bounded by the dimension of the original variety $\b_{M,N,s}$
\cite[Cor 11.13]{Harris}, we see that
	$$\dim(\pi_1(\b_{M,N,s})) \leq \dim(\b_{M,N,s}) = 4M+4MN-N-2sN-4$$ 
In particular,  when $N>4M-3$, the dimension of this projection is
{\em strictly less} than $\dim \Pro(\Lambda_s)= 4MN -2sN -1$.
Hence if $N > 4M-3$ the projection is contained in a proper algebraic subset of $\Pro(\Lambda_s)$, which means that for a generic pair $(\UU,\VV)$ of $s$-overlapping
frames there is no pair $(A,B)$ of distinct rank-one Hermitian matrices
in $\ker \A_{\UU\VV}$.
\end{proof}

\begin{proof}[Proof of Theorem \ref{generica}]
Consider the morphism
$\pi_1 \colon \b'_{M,N,s} \to \Pro(\Lambda_s)$.
If the image of $\pi_1$ lies in a proper algebraic subset
of $\Pro(\Lambda_s)$ then as in the proof of Theorem \ref{genericpairsab}
we can conclude that for a generic pair $(\UU, \VV)$ of $s$-overlapping frames there are no pairs of distinct rank-one Hermitian matrices $(A,B)$
such that $(\alpha_i^*A \alpha_i)_{i=1}^N = (\beta_i^* B \beta_i)_{i=1}^N$.
Hence, we can assume that the image of $\pi_1$ is not contained in a proper
algebraic subset of $\Pro(\Lambda_s)$; i.e. the map $\pi_1$ is dominant.

Applying Proposition \ref{prop.vakil} to each irreducible component of
$\b'_{M,N,s}$ that dominates $\Pro(\Lambda_s)$
we conclude that there is a Zariski open set of points $q = [U,U',V,V'] \in \Pro(\Lambda_s)$ such that the dimension of the fiber $\pi_1^{-1}(q)$ equals
$$\dim \b'_{M,N} - \dim \Pro(\Lambda_s) =
(4M + 4MN -N - 2sN - 4) - (4MN - 2sN {-1}) =4M-N-3$$
with the understanding that if $4M-N{-3} < 0$, 
the fiber is empty.

If $q = [U,U',V,V'] \in \Pro(\Lambda_s)$ then its fiber $\pi_1^{-1}(q)$ 
is the quasi-projective variety
$$\Gamma = \{[ A , B ] \in \Pro(\M_M^1 \times \M_M^1) |  A \neq  B ,
        \ (U-iU')^T A (U+iU') = (V-iV')^T B (V+iV)\}$$
where $\M_M^1$ denotes the quasi-affine variety of rank-one
        $M \times M$ matrices. If $q$ is chosen generically, then the previous
	paragraph implies that  $\dim \Gamma =4M-N{-3}$.

Let $p_1: \Gamma \to \Pro(\M_M^1)$ be the
projection to the first coordinate. (Note that this projection is well
defined because $A$ is never 0 since it is a rank-one matrix.)  Since
$\dim \Gamma = 4M-N{-3}$ the closure $\overline{p_1(\Gamma)}$ will be a
proper algebraic subset of $\Pro(\M_M^1)$ if
$$\dim \Pro(\M_M^1) = 2M-2 > 4M -N{-3};$$ i.e. if $N > 2M{-1}$.

This implies that if $N > 2M{-1}$, for
a generic choice of $[U,U',V,V']$, the set of rank-one matrices$A$
such that there is no rank-one matrix $B$ satisfying
$(U-iU')^T A (U+iU') = (V-iV')^T B (V+iV')$ is a
Zariski open subset of $\Pro(\M_M^1)$. Restricting 
$[U,U',V,V']$ to the real points of $\Lambda_s$ corresponding to $s$-overlapping
frames and restricting $A$ to the real points $\M^1_M$ corresponding
to rank-one Hermitian matrices
we see that for a 
a generic pair of $N > 2M{-1}$ element frames $\UU, \VV$ and a
generic choice of $A \in \H_M^1$, there is no
$B\neq A \in \H_M^1$ such that
$(\alpha_i^*A \alpha_i)_{i=1}^N = (\beta_i^* B \beta_i)_{i=1}^N$
\end{proof}

\section{Fourier phase retrieval for overlapping real frames - proof of Theorem \ref{thm.realsparse}}\label{realcase}
\subsection{Setup and notation}
We now consider the case where $\K=\R$ where the situation is
more subtle and requires new ideas.
The difficulty is that if $\UU$ is a real frame with corresponding matrix
$A_{\UU}$ then $A_{\UU}F$ is not the matrix of a real frame, and it is also not the matrix of a `generic' complex frame. To circumvent this difficulty
we start by replacing the power spectrum with the periodic auto-correlation.
Recall from \cite{bendory2020toward}
that if $x=(x[0], \ldots , x[N-1]) \in \C^N$ then the periodic auto-correlation
$a_x \in \C^N$ is the vector whose $\ell$-th component is
$$a_x[\ell] = \sum_{n=0}^{N-1} x[n]\overline{x[\ell + n]}.$$
where all indices are taken $\bmod \, N$.
Since $Fa_{x} =
|Fx|^2$ for any $x \in \C^N$, the periodic auto-correlation gives the same information as the power spectrum.

If we restrict to $x \in \R^N$ then $a_x$ is the collection
of real quadratic functions
$$a_x[\ell] = \sum_{n=0}^{N-1} x[n]x[\ell + n].$$
For any $x\in \R^N$ the auto-correlation can
be thought of as a quadratic map $a_x:\R^N \to \R^{\lfloor N/2 \rfloor
  + 1}$ since $a_x[\ell] = a_x[N-\ell]$,
and it suffices to prove the corresponding restatement of Theorem~\ref{thm.realsparse} with the auto-correlation replacing the power spectrum.

Unfortunately, the auto-correlation is a collection of
$\lfloor N/2 \rfloor + 1$ quadratic forms in $N$-variables. Because each variable appears in each of the forms
it is difficult to directly compute the dimension of the corresponding
incidence varieties as we did in Section \ref{complexcase}.
In the following subsection, we leverage
techniques and ideas from \cite{bendory2022sample} which will allow
us to replace the auto-correlation with a simpler set of real quadratic functions.

\begin{remark} For the sake of notational simplicity, we will assume in our further discussion that
  $N$ is even so that the auto-correlation is given by $N/2 +1$ quadratic functions.
  If $N$ is odd, the auto-correlation is given by $(N+1)/2$ quadratic functions
  which affects the bounds in Theorem \ref{real_genericL} with $4M-5$ replacing
  $4M-6$, Theorem \ref{real_genericpairs} with $4M-3$ replacing $4M-4$,
  and Theorem \ref{real_generica} with $2M-1$ replacing $2M-2$.
\end{remark}

\subsection{Second moment for dihedral group actions} \label{sec.secondmoment}
This subsection focuses on reinterpreting the auto-correlation
function as the second moment under the action of the dihedral group
on $\C^N$ or $\R^N$. By doing
so, we can use the results of \cite{bendory2022sample}
on the structure of the second moment to obtain 
a simpler system of homogeneous quadratic functions
which gives the same information as the periodic auto-correlation.

Following standard terminology, we refer to a vector space $V$
equipped with a linear action of a group $G$ as a
{\em representation} of $G$. Although we are inherently interested in
real signals, we illustrate the reduction for complex signals, where
the theory is cleaner and it is easier to understand the core
ideas at play. At the end of this section we specialize
to the 
case of real signals to get our desired simplification.

Let $D_{2N}$ denote the dihedral group of order $2N$, with presentation
$$D_{2N} = \{r, s | r^N=s^2 = 1, rs=sr^{-1}\}$$
and let $D_{2N}$ act on $\C^N$ via
\begin{equation*}
\begin{split}
(r\cdot x)[\ell] &= x[\ell+1]\\
(s\cdot x)[\ell] &= x[N - \ell]
\end{split}
\end{equation*}
where again all indices are taken $\bmod \, N$.

This makes $\C^N$ an $N-$dimensional representation of $D_{2N}$. Define the second moment of a vector $x$ as the function $m^2_x: \C^N \to \C^{N\times N}$ given by
$$m^2_x = {1\over{|D_{2N}|}}\sum_{g\in D_{2N}}(g \cdot x)(g \cdot x)^*.$$

For an arbitrary complex vector the second moment captures less information than the periodic auto-correlation. However the following
proposition shows that if $x$ is real then the second moment is equivalent to the periodic auto-correlation \cite[Section 2.4.2]{bendory2022sample}.
\begin{proposition}\label{autocormoment}
	Given $x \in \R^N$, the second moment $m^2_x$ captures the same information as the periodic auto-correlation function $a_x$. 
\end{proposition}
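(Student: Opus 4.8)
The plan is to unwind what "captures the same information" means and then compute both objects explicitly for a real vector $x$. Concretely, two maps $\phi, \psi$ defined on $\R^N$ capture the same information if each factors through the other, i.e. $\phi(x) = \phi(y) \iff \psi(x) = \psi(y)$; equivalently there are maps $f, g$ with $\phi = f \circ \psi$ and $\psi = g \circ \phi$. So I would aim to show that the entries of $m^2_x$ are (real-)linear combinations of the entries of $a_x$ and conversely.

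First I would write out the second moment entrywise. Splitting the sum over $D_{2N}$ into the cyclic part $\{r^k\}_{k=0}^{N-1}$ and the reflected part $\{sr^k\}_{k=0}^{N-1}$, and using $(r^k \cdot x)[\ell] = x[\ell + k]$, the $(\ell, m)$ entry of $\sum_k (r^k x)(r^k x)^*$ becomes $\sum_k x[\ell+k]\overline{x[m+k]} = \sum_n x[\ell - m + n]\overline{x[n]}$ (reindex $n = m+k$), which is exactly $a_x[\ell - m]$ — here I use that $x$ is real so no conjugation subtlety arises, though the identity holds over $\C$ too. The reflected terms $(sr^k \cdot x)[\ell] = x[N - \ell - k]$ (indices mod $N$) contribute, after an analogous reindexing, a term of the form $\sum_n x[-\ell - n]\overline{x[-m - n]} = a_x[m - \ell]$ (again for real $x$, using $a_x[\ell] = a_x[N - \ell]$, this equals $a_x[\ell - m]$ as well). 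Hence every entry $(m^2_x)_{\ell m}$ equals $\tfrac{1}{N} a_x[\ell - m]$ (or a fixed small-integer multiple thereof), so $m^2_x$ is a linear — in fact circulant-structured — function of $a_x$. This gives one direction: $m^2_x$ is determined by $a_x$.

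For the converse I would simply read off the auto-correlation entries from the matrix: since $(m^2_x)_{\ell, 0} = \tfrac{1}{N} a_x[\ell]$, the first column (or any single diagonal) of $m^2_x$ recovers all of $a_x$. Combining the two directions shows $m^2_x$ and $a_x$ determine each other, which is precisely the claim. I would also remark that this is why we may replace $a_x$ by $m^2_x$ in the conditions (FR1)/(FR2) and their generic variants: an equality $m^2_x = m^2_y$ is equivalent to $a_x = a_y$, and a proper real-algebraic subset defined by one is defined by the other.

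The main obstacle is bookkeeping with the mod-$N$ indices in the reflection terms: one must be careful that the contribution of the $sr^k$ summands really does land among the $a_x[\cdot]$ values and does not introduce genuinely new quadratic invariants (which it would for a generic complex $x$, where $m^2_x$ is strictly weaker — this is the content of the sentence preceding the proposition). The point is that for real $x$ the reflection $s$ acts by an involution that is compatible with the symmetry $a_x[\ell] = a_x[N - \ell]$, so the extra terms collapse onto already-present auto-correlation values rather than contributing independent data; verifying this collapse is the one step that genuinely uses $x \in \R^N$ and deserves to be done carefully. I would cite \cite[Section 2.4.2]{bendory2022sample} for the underlying computation and present the explicit entrywise identity as the proof.
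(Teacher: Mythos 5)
Your proposal is correct and follows essentially the same route as the paper: decompose the $D_{2N}$-sum over the two cosets of the cyclic subgroup, compute each block entrywise to get $a_x[\beta-\alpha]$ and $a_x[\alpha-\beta]$ respectively, and use the real symmetry $a_x[\ell]=a_x[-\ell]$ to collapse them to $\tfrac{2}{N}a_x[\beta-\alpha]$. Your explicit remark that $a_x$ is recovered from a single column of $m^2_x$ is left implicit in the paper but is a harmless and correct addition.
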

\begin{proof}
Let $H= \Z_N$ denote the cyclic subgroup of $D_{2N}$ generated by $r$. We decompose the second moment according to the cosets of $H$ in $D_{2N}$
$$m^2_x = {1\over{2N}}\left(\sum_{g\in H}(g \cdot x)(g \cdot x)^* + \sum_{g \in Hs} (g \cdot x)(g \cdot x)^*\right) = {1\over{2N}}(S_1 + S_2)$$
where $S_1, S_2$ are $N \times N$ Hermitian matrices.

For any $(\alpha, \beta) \in [0,N-1] \times [0, N-1]$
we can write
$$S_1[\alpha, \beta] = \sum_{p=0}^{N-1} x[\alpha+p] \overline{x[\beta + p]} = \sum_{n=0}^{N-1}  x[n]\overline{x[n + (\beta - \alpha)]} = a_{x}[\beta -\alpha],$$
where we reindexed the first sum with $n=\alpha+p$.
Likewise, 
$$S_2[\alpha, \beta] = \sum_{p=0}^{N-1} {x[N-(\alpha +p)]}\overline{x[N-(\beta+p})] = \sum_{n=0}^{N-1}x[n]\overline{x[(n+(\alpha - \beta)]} = a_{x}[\alpha-\beta],$$
where we reindexed the first sum with $n = N-(\alpha +p)$.
Putting it all together, we see that
$$m^2_x[\alpha,\beta] = {1\over{N}}\left(a_x[\beta-\alpha] +
a_x[\alpha -\beta]\right) .$$
When $x \in \R^N$, $a_x[n] = a_x[-n]$ where the indices are taken $\bmod\,N$,
so $$m_x^2[\alpha,\beta] = {2\over{N}}a_x[\beta -\alpha].$$
\end{proof}

We now consider the action of the dihedral group $D_{2N}$
on $\C^N$ with basis $(v_0, \ldots , v_{N-1})$
where $v_i =Fe_i$ and $F$ is the $N \times N$ discrete Fourier transform matrix. If we expand $z \in \C^N$ in this basis as
$z = \sum_{\ell = 0}^{N-1} z[\ell]v_\ell$
then 
\begin{equation*}
\begin{split}
(r\cdot z)[\ell] &= e^{{2\pi \iota \over{N}}} z[\ell]\\
(s\cdot z)[\ell] &= z[N-\ell].
\end{split}
\end{equation*}
As in \cite[Section 2.4.2]{bendory2022sample}
the $D_{2N}$ action decomposes $\C^{N}$ into a direct sum of irreducible representations
$$\C^N = V_0 \oplus V_1 \oplus \dots \oplus V_{N/2 -1} \oplus V_{N/2}$$
where $V_0 = \mathrm{span}(v_0)$, $V_{N/2} = \mathrm{span}(v_{N/2})$ and $V_k = \mathrm{span}(v_k, v_{N-k})$ for $0<k<N/2$.

With this we are finally ready to perform the key reduction. 
\begin{lemma}\label{bz}
	There exists a quadratic function $b_z:\C^N\to \R_{\geq 0}^{N/2 + 1}$ such that 
	\begin{enumerate}
		\item Each component of $b_z$ is a positive definite quadratic form involving at most two variables
		\item $a_x = a_y \iff b_{Fx} = b_{Fy}$
	\end{enumerate}
\end{lemma}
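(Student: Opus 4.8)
The plan is to write $b_z$ down explicitly and verify the two properties by a direct Fourier computation; the second-moment language of the surrounding discussion motivates the choice of $b_z$ but is not strictly needed. Define, for $z\in\C^N$ (recall $N$ is even),
$$b_z[0]=|z[0]|^2,\qquad b_z[N/2]=|z[N/2]|^2,\qquad b_z[k]=|z[k]|^2+|z[N-k]|^2\ \ (0<k<N/2).$$
Property (1) is then immediate: each component is manifestly a positive-definite Hermitian form in at most two of the coordinates $z[0],\dots,z[N-1]$. Up to the scalar $\tfrac12$ these are exactly the blocks that the second moment $m^2_z$ takes on the isotypic summands $V_0,\dots,V_{N/2}$ of $\C^N$ by Schur's lemma.

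For property (2) I would use the identity $Fa_x=|Fx|^2$ noted earlier: since $F$ is invertible, $a_x=a_y$ is equivalent to $|(Fx)[\ell]|^2=|(Fy)[\ell]|^2$ for all $\ell$. The key observation is that when $x$ is \emph{real} the vector $Fx$ is conjugate-symmetric, $(Fx)[N-\ell]=\overline{(Fx)[\ell]}$ (since $\overline{\omega^{\ell j}}=\omega^{(N-\ell)j}$), so $|(Fx)[N-\ell]|^2=|(Fx)[\ell]|^2$; hence the power spectrum of a real vector is determined by, and determines, its entries at $\ell=0,1,\dots,N/2$. Substituting $z=Fx$ gives $b_{Fx}[0]=|(Fx)[0]|^2$, $b_{Fx}[N/2]=|(Fx)[N/2]|^2$, $b_{Fx}[k]=2|(Fx)[k]|^2$ for $0<k<N/2$, and likewise for $y$; so $b_{Fx}=b_{Fy}$ says precisely that $|(Fx)[\ell]|^2=|(Fy)[\ell]|^2$ for $\ell=0,\dots,N/2$, which by conjugate symmetry is the same as for all $\ell$, i.e. $a_x=a_y$.

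Alternatively, one may run the coset decomposition of the proof of Proposition \ref{autocormoment} for the $v$-basis action $r\colon z[\ell]\mapsto\omega^{\ell}z[\ell]$, $s\colon z[\ell]\mapsto z[N-\ell]$ to get that $m^2_z$ is diagonal with $(\ell,\ell)$-entry $\tfrac12(|z[\ell]|^2+|z[N-\ell]|^2)$ — whose distinct values are the $\tfrac12 b_z[k]$ — and then combine $m^2_{Fx}=F\,m^2_x\,F^{*}$ (the left-hand $m^2$ computed for the $v$-basis action, the right-hand one for the standard action) with Proposition \ref{autocormoment}.

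There is no genuine obstacle: this is essentially a bookkeeping lemma. The one point that deserves care is that the equivalence in (2) really uses that $x$ and $y$ are real, so that $Fx$ and $Fy$ are conjugate-symmetric; for general complex vectors it fails, since the $N/2+1$ numbers of $b_z$ cannot encode the $N$ independent entries of a complex power spectrum. Hence the lemma is to be read — as it is used throughout this section — with $x,y\in\R^N$.
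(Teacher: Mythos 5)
Your proof is correct, and it reaches the same function $b_z$ by a genuinely more elementary route than the paper's. The paper derives $b_z$ from the representation-theoretic structure of the dihedral second moment: it decomposes $\C^N$ into $D_{2N}$-irreducibles $V_0\oplus\cdots\oplus V_{N/2}$, invokes Proposition 2.1 of \cite{bendory2022sample} to see that $m^2_z$ is diagonal with entries $|z[n]|^2+|z[N-n]|^2$ on each isotypic block, and then chains $a_x=a_y\iff m^2_x=m^2_y\iff m^2_{Fx}=m^2_{Fy}\iff b_{Fx}=b_{Fy}$, the first equivalence being Proposition \ref{autocormoment}. You instead use only the identity $Fa_x=|Fx|^2$ together with the conjugate symmetry $(Fx)[N-\ell]=\overline{(Fx)[\ell]}$ of the DFT of a real vector; this is shorter and self-contained, at the cost of not exhibiting $b_z$ as the second moment, which is the structural point the authors exploit when they repeat the construction for the real Fourier basis in the next paragraphs. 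Your closing caveat is also well taken: the equivalence in (2) genuinely requires $x,y\in\R^N$ (a count of the $N/2+1$ components of $b_z$ against the $N$ generically independent entries of a complex power spectrum shows it fails otherwise), and the paper's own chain of equivalences uses this silently through Proposition \ref{autocormoment}, which is stated only for real $x$.
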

\begin{proof}
Let $z\in \C^M$ and write 

\begin{equation*}
\begin{split}
z &= z[0]v_0 + (z[1]v_1 + z[N-1]v_{N-1}) + \dots + \left(z\left[\frac{N}{2}-1\right]v_{\frac{N}{2}-1} + z\left[\frac{N}{2}+1\right]v_{\frac{N}{2}+1}\right) + z\left[\frac{N}{2}\right]v_{\frac{N}{2}}\\
&:=f[0] + f[1] + \dots + f\left[\frac{N}{2}-1\right]+ f\left[\frac{N}{2}\right].
\end{split}
\end{equation*}
We see from \cite[Prop 2.1]{bendory2022sample}
that the second moment of the one dimensional irreducible representations $V_0$ and $V_{N/2}$ are determined by $|f[0]|^2 (v_0\otimes \overline{v_0})$ and $|f[N/2]|^2(v_{N/2}\otimes \overline{v_{N/2}})$, while that of the other two dimensional irreducible representations is given by
$$\ip{f[n]}{f[n]}\left(v_n\otimes \o{v_n} + v_{N-n}\otimes \o{v_{N-n}}\right).$$
Finally identifying $v_n\otimes \overline{v_n}$ with the $N\times N$ matrix with a $1$ in the $n^{\text{th}}$ diagonal entry and zeroes elsewhere\footnote{If $V$ is an $N$-dimensional vector space with orthonormal basis $v_0, \ldots , v_{N-1}$
    and if $v = \sum_{i=0}^{N-1} a_i v_i$ then the conjugate symmetric tensor
    $v \otimes \overline{v}$ can be represented by $N \times N$ Gram matrix whose $(i,j)$ entry is $a_i \overline{a_j}$.}
equation (2.10) from Section 2.3 of \cite{bendory2022sample} to write the second moment as a diagonal matrix where 
$$m^2_z[n,n] = \begin{cases}
|z[0]|^2 & n =0, N/2 \\
|z[n]|^2 + |z[N-n]|^2 & n \neq 0, N/2.
\end{cases}$$
Thus the second moment is determined completely by the following quadratic function 
$$b_z = (|z[0]|^2, |z[1]|^2 + |z[N-1]|^2, \dots, |z[N/2]|^2).$$
Our discussion shows the following
equivalence for vectors $x,y \in \C^N$,
$$|Fx|^2 = |Fy|^2 \iff a_x = a_y \iff m^2_x = m^2_y \iff m^2_{Fx} = m^2_{Fy} \iff b_{Fx} = b_{Fy}.$$ 
\end{proof}

We now restrict ourselves to the real subspace  $V = \R^N$
where the original action of $D_{2N}$ is given by 
$$(r\cdot x)[n] = x[n+1] \qquad \text{ and } \qquad (s \cdot x)[n]= x[N-n]$$
and, once again, the indices are taken modulo $N$.
If we consider the  action of $D_{2N}$ with respect to the real
Fourier basis
$$v_\ell = 
\begin{cases}
\sum_{k=0}^{N-1} \cos\left(\frac{2\pi l k}{N}\right)e_k \qquad 0 \leq l \leq \frac{N}{2}\\\\
\sum_{k=0}^{N-1} \sin\left(\frac{2\pi l k}{N} \right)e_k \qquad \frac{N}{2} < l < N\\
\end{cases}$$ 
the vector space $\R^N$ decomposes as a direct sum of irreducible
representations
$$\R^N = V_0 \oplus V_1 \oplus \dots V_{N/2-1} \oplus V_{N/2}$$
where
$V_0$ and $V_{N/2}$ are one dimensional irreducible representations
spanned by $v_0$ and $v_{N/2}$ respectively, and the other $V_k$ are
two-dimensional irreducible representations spanned by $\{v_k,
v_{N-k}\}$, as before.
The action of $D_{2N}$ is as follows:
On the one dimensional
components, we have
\begin{equation*}
	\begin{split}
		r\cdot v_0 &= v_0 \qquad \qquad \, \, \, \, \, s\cdot v_0 = v_0 \\
		r\cdot v_{N/2} &= -v_{N/2} \qquad s\cdot v_{N/2} = v_{N/2}
	\end{split}
\end{equation*}  
and over the two dimensional components, we have that $r$ acts via rotation matrices and again $s$ acts by reflection. In particular, 
\begin{align*}
		r\cdot v_k &= \cos\left(\frac{2\pi k}{N} \right)v_k + \sin\left(\frac{2\pi k}{N}\right)v_{N-k} & s\cdot v_k &= v_{N-k} \\
		r\cdot v_{N-k} &= \cos\left(\frac{2\pi (N-k)}{N} \right)v_k + \sin\left(\frac{2\pi (N-k)}{N}\right)v_{N-k} & s\cdot v_{N-k} &= v_{k}.
\end{align*}  
If we compute the second moment of a vector $z = \sum z[\ell]v_\ell$
we see that it is determined
by the real quadratic function
\begin{equation}
b_z = (z[0]^2, z[1]^2 + z[N-1]^2, \dots, z[N/2]^2).
\end{equation}
We conclude that
$$a_x = a_y \iff b_{F^\texttt{r}x} = b_{F^\texttt{r}y}$$
where $F^{\texttt{r}}x$ is the real discrete Fourier transform
given by the formula
\begin{equation}\label{realfouriereq}
(F^rx)[n] = 
    \begin{cases}
		\sum_{k=0}^{N-1} x[k]\cos\left(\frac{2\pi n k}{N}\right) \qquad \text{ for }  0\leq n \leq \frac{N}{2}\\\\	
		\sum_{k=0}^{N-1} x[k]\sin\left(\frac{2\pi(N/2-n) k}{N}\right) \qquad  \text{ for }  \frac{N}{2} +1 \leq n \leq \frac{N-1}{2} -1.
		\end{cases}
\end{equation}

\subsection{Generic recovery for real signals}
Given an $N$-element frame  $\UU = (\alpha_0, \ldots , \alpha_{N-1})$ 
frame on $\R^N$ we
have that
$b_{L_{\UU}(x)} = [b_{0}, b_{1}, \dots, b_{N/2}]^T$
where 
\begin{equation}
\begin{split}
b_{0} &= \alpha_0^T xx^T\alpha_0 \\
b_{N/2} &= \alpha_{N/2}^Txx^T\alpha_{N/2} \text{ and }\\
b_{k} &= \alpha_k^T xx^T\alpha_k + \alpha_{N-k}^T xx^T \alpha_{N-k} ,\; k=1, \ldots ,
N/2-1 .
\end{split}
\end{equation}
(Note that when $N$ is odd the form of $b_{L_{\UU}(x)}$ is
modified as follows: 
The second equation is removed, and
we replace $N/2$ with $\lfloor N/2 \rfloor$ in
the third equation.)

In the light of
the discussion above and the fact $F^{\texttt{r}}$ is an automorphism
of $\R^N$, Theorem \ref{thm.realsparse}(i) follows from the following
two statements.
\begin{enumerate}
\item[(F1)] If $N > 4M-6$ and $\UU = (\alpha_0, \ldots , \alpha_{N-1})$  is a
  generic frame on $\R^M$ then $b_{L_{\UU}(v)} = b_{L_{\UU}(w)}$ if and only
  if $w = \pm v$.
\item[(F2)] If $N > 4M-4$ and $\UU = (\alpha_0, \ldots , \alpha_{N-1})$
  and $\VV = (\beta_1, \ldots , \beta_N)$ are generic overlapping frames
$b_{L_{\UU}(v)} \neq b_{L_{\UU}(w)}$ unless $L_{\UU}(v) = \pm L_{\VV}(w)$.
\end{enumerate}

Likewise, Theorem \ref{thm.realsparse}(ii) follows from
the following statment.
\begin{enumerate}
\item[(Fg)] If $N > 2M-2$ and $\UU,\VV$ are generic overlapping
  frames then for a generic vector $x \in \R^M$,
  $b_{L_{\UU}(x)} = b_{L_{\VV}}(y)$ if and only
  if $L_{\UU}(x) = \pm L_{\VV}(y)$.
\end{enumerate}

\bigskip

Consider the map
$$\B_{\UU}: \frac{\R^M}{\{\pm 1\}} \to \R^{N/2 + 1},\B_{\UU}(x) = b_{L_{\UU}(x)}.$$
To prove statement (F1) we must show that if $N > 4M-6$ and $\UU=(\alpha_0, \ldots , \alpha_{N-1})$
is a generic frame on $\R^M$ then\\
\centerline{(*) $\B_{\UU}(x) = \B_{\UU}(y)$ if and only if $x=y$.}

Adapting the proof of \cite[Lemma 9]{bandeira2014saving}
we obtain the following:
\begin{proposition}\label{real_lemma9}
Condition (*) fails if and only if there is a non-zero real symmetric matrix $ A $ such that $\rank(A) \leq 2$ and
\begin{equation} \label{eq.dagger}
  \begin{array}{l}
     \alpha_0^T A \alpha_0  = 0\\
\alpha_{N/2}^T A \alpha_{N/2}  =  0\\
\alpha_k^T A \alpha_{k} + \alpha_{N-k}^T A \alpha_{N-k}  =  0, k = 1,\dots, \frac{N}{2}-1
  \end{array}
  \end{equation}
\end{proposition}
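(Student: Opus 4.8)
The plan is to recast condition (*) as the non-existence of a nonzero rank-$\le 2$ symmetric matrix in the common kernel of the quadratic forms that assemble $b_{L_{\UU}(\cdot)}$, following the standard argument of \cite[Lemma 9]{bandeira2014saving} for ordinary phase retrieval but keeping track of the fact that here the measurements occur in the paired combinations $\alpha_k^T xx^T\alpha_k + \alpha_{N-k}^T xx^T\alpha_{N-k}$ rather than one frame vector at a time.

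For the ``only if'' direction, suppose (*) fails, so there are $x,y\in\R^M$ with $x\neq\pm y$ and $\B_{\UU}(x)=\B_{\UU}(y)$. Put $A = xx^T - yy^T$. Then $A$ is symmetric with $\rank(A)\le 2$, and $A\neq 0$ because for real vectors $uu^T = ww^T$ forces $u = \pm w$. Subtracting $\B_{\UU}(y)$ from $\B_{\UU}(x)$ coordinate by coordinate — using $b_0 = \alpha_0^T xx^T\alpha_0$, $b_{N/2} = \alpha_{N/2}^T xx^T\alpha_{N/2}$, and $b_k = \alpha_k^T xx^T\alpha_k + \alpha_{N-k}^T xx^T\alpha_{N-k}$ — produces exactly the system \eqref{eq.dagger} for this $A$.

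For the ``if'' direction I would start from a nonzero symmetric $A$ with $\rank(A)\le 2$ satisfying \eqref{eq.dagger} and first argue that its signature must be $(1,1)$. Diagonalize $A = \lambda_1 u_1u_1^T + \lambda_2 u_2u_2^T$ with $u_1,u_2$ orthonormal. If $A$ were semidefinite — say positive semidefinite, after replacing $A$ by $-A$ if necessary, which again satisfies \eqref{eq.dagger} — then every term $\alpha^T A\alpha = \lambda_1(u_1^T\alpha)^2 + \lambda_2(u_2^T\alpha)^2$ appearing in \eqref{eq.dagger} is nonnegative and therefore zero, forcing $u_i^T\alpha_j = 0$ for all frame vectors $\alpha_j$ and all $i$ with $\lambda_i\neq 0$; since a frame spans $\R^M$ this yields $u_i = 0$ whenever $\lambda_i\neq 0$, contradicting $A\neq 0$. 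Hence $\lambda_1 > 0 > \lambda_2$, and setting $x = \sqrt{\lambda_1}\,u_1$, $y = \sqrt{-\lambda_2}\,u_2$ gives $A = xx^T - yy^T$ with $x,y$ linearly independent, so $x\neq\pm y$. Reading \eqref{eq.dagger} backwards then says $\B_{\UU}(x) = \B_{\UU}(y)$, so (*) fails. The only step carrying any real content is this signature computation — ruling out that a rank-$2$ definite or rank-$1$ matrix satisfies \eqref{eq.dagger} — and it is precisely where the spanning property of the frame is used; everything else is bookkeeping, matching the entries of $b_{L_{\UU}(x)} - b_{L_{\UU}(y)}$ against the left-hand sides of \eqref{eq.dagger} together with the elementary structure of rank-$\le 2$ symmetric matrices. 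The odd-$N$ case is handled identically after deleting the $\alpha_{N/2}$ equation and replacing $N/2$ by $\lfloor N/2\rfloor$ throughout.
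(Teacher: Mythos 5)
Your proof is correct and is precisely the adaptation of \cite[Lemma 9]{bandeira2014saving} that the paper invokes without writing out: the ``only if'' direction via $A = xx^T - yy^T$, and the ``if'' direction by ruling out semidefinite $A$ using the spanning property of the frame (noting that each paired equation is a sum of terms of the same sign, so each term vanishes individually) and then splitting an indefinite $A$ as $xx^T - yy^T$ with $x,y$ independent. No gaps.
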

We now follow the same strategy of \cite{conca2015algebraic}
to show that if $N > 4M-6$ then, for a generic frame $\UU$,
there are no rank-two symmetric matrices satisfying \eqref{eq.dagger}.
To that end let $V^2_M$ be the affine
variety 
of symmetric matrices of rank at most 2
and
consider the projective incidence variety
$$\b_{M,N} \subseteq \Pro\left((\C^{M})^N\right) \times
\Pro(V^2_M)$$ where 
$([\alpha_0, \ldots
, \alpha_{N-1}], [A])$ of $\b_{M,N,s}$ satisfies equations \eqref{eq.dagger}.

The analogue of Proposition \ref{injectiveiff} for
this case is the following.
\begin{proposition}\label{real_injectiveiff}
  Given the set up above, condition (*) holds for the frame $\UU = (\alpha_0, \ldots , \alpha_{N-1})$
  if and only if
  $(\alpha_0, \ldots, \alpha_{N-1}) \notin \pi_1((\b_{M,N,s})_\R)$
  \end{proposition}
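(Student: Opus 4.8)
The plan is to mimic the argument for Proposition~\ref{injectiveiff} in the complex case, transported through the reduction of Lemma~\ref{bz} and Proposition~\ref{real_lemma9}. First I would recall that by Proposition~\ref{real_lemma9}, condition $(*)$ fails for the frame $\UU = (\alpha_0,\ldots,\alpha_{N-1})$ precisely when there exists a non-zero real symmetric matrix $A$ with $\rank(A) \leq 2$ satisfying the system \eqref{eq.dagger}. Since $A$ is non-zero this system is invariant under scaling $A$, so it is a well-posed condition on the point $[A] \in \Pro(V^2_M)$; likewise the equations \eqref{eq.dagger} are homogeneous (in fact bilinear) in the frame vectors $\alpha_0,\ldots,\alpha_{N-1}$, so the condition descends to $[\alpha_0,\ldots,\alpha_{N-1}] \in \Pro((\C^M)^N)$. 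Thus the locus of pairs $([\alpha_0,\ldots,\alpha_{N-1}],[A])$ satisfying \eqref{eq.dagger} together with $\rank(A)\le 2$ and $A \neq 0$ is exactly the quasi-projective incidence variety $\b_{M,N}$ defined just above the statement.

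Next I would spell out the two directions. If condition $(*)$ fails for a \emph{real} frame $\UU$, then Proposition~\ref{real_lemma9} produces a real rank-$\le 2$ symmetric $A \ne 0$ satisfying \eqref{eq.dagger}; the pair $([\alpha_0,\ldots,\alpha_{N-1}],[A])$ then lies in $\b_{M,N}$ and has real coordinates, hence lies in $(\b_{M,N})_\R$, so $(\alpha_0,\ldots,\alpha_{N-1}) \in \pi_1((\b_{M,N})_\R)$ (here $\pi_1$ is the projection to the first factor, and one passes between the affine tuple and its class in projective space). Conversely, if $(\alpha_0,\ldots,\alpha_{N-1}) \in \pi_1((\b_{M,N})_\R)$, then there is a real $[A]$ with $([\alpha_0,\ldots,\alpha_{N-1}],[A]) \in (\b_{M,N})_\R$; the matrix $A$ is then a non-zero real symmetric matrix of rank at most $2$ — rank is a closed condition that cuts out $V^2_M$, and realness of the point forces $A$ (up to real scalar) to be real symmetric — satisfying \eqref{eq.dagger}, so by Proposition~\ref{real_lemma9} condition $(*)$ fails for $\UU$. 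Combining the two directions gives the stated equivalence.

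The only subtlety — and the step I expect to require the most care — is the bookkeeping between the \emph{complex} variety $\b_{M,N} \subseteq \Pro((\C^M)^N) \times \Pro(V^2_M)$ and its real points. One must check that a real point of the incidence variety really does correspond to a genuinely real symmetric matrix $A$ and a genuinely real frame: this is where the defining equations being defined over $\R$ is used, exactly as in the passage $\Pro(\mathcal I) = \b_{M,N,s}(\R)$ in the proof of Proposition~\ref{injectiveiff}. One also needs that $[\alpha_0,\ldots,\alpha_{N-1}] \in \pi_1((\b_{M,N})_\R)$ implies $(\alpha_0,\ldots,\alpha_{N-1})$ itself (as an affine tuple, a genuine frame) is obtained — but since the frame condition (full rank of $A_\UU$) is an open condition that is automatically satisfied for the frames under consideration, this causes no difficulty. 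Everything else is a direct transcription of the complex argument, with ``pair of rank-one Hermitian matrices in $\ker \mathcal A_{\UU\VV}$'' replaced by ``single rank-$\le 2$ symmetric matrix satisfying \eqref{eq.dagger}''.
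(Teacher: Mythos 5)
Your argument is correct and is exactly the intended one: the paper gives no separate proof of Proposition~\ref{real_injectiveiff}, presenting it only as ``the analogue of Proposition~\ref{injectiveiff},'' and your write-up is precisely that analogue --- combining Proposition~\ref{real_lemma9} with the definition of $\b_{M,N}$ and the observation that a real point of the incidence variety admits a genuinely real symmetric representative $A$ (the counterpart of the identification $\Pro(\mathcal I)=\b_{M,N,s}(\R)$ in the complex case). The only nit is terminological: the equations \eqref{eq.dagger} are bi-homogeneous of bidegree $(2,1)$ (quadratic in the $\alpha_n$, linear in $A$) rather than ``bilinear,'' but this does not affect the well-definedness argument or the conclusion.
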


\begin{proposition}\label{real_dimbmn}
The dimension of $\b_{M,N}$ equals $2M + MN-N/2-4$.
\end{proposition}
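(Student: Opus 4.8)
The plan is to follow the template of the complex computation in Proposition~\ref{dimbmn}: rather than projecting $\b_{M,N}$ to the space of frames, project it to the space of rank at most $2$ symmetric matrices, where the base has known dimension and the fibers are transparent. Let $\pi_2\colon \b_{M,N}\to \Pro(V^2_M)$ be the second projection. By \cite[Prop 12.2]{Harris} the variety $V^2_M$ of symmetric $M\times M$ matrices of rank at most two is irreducible of dimension $2M-1$, so $\Pro(V^2_M)$ is irreducible of dimension $2M-2$. It therefore suffices to show that $\pi_2$ is flat and that every fiber has projective dimension $MN-N/2-2$: by Proposition~\ref{prop.flatness}(iv) every irreducible component of $\b_{M,N}$ then has dimension $(2M-2)+(MN-N/2-2)=2M+MN-N/2-4$, which is the claim.

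The key structural observation is that the $N/2+1$ equations \eqref{eq.dagger} involve \emph{pairwise disjoint} blocks of the frame vectors: $\alpha_0^TA\alpha_0=0$ involves only $\alpha_0$, $\alpha_{N/2}^TA\alpha_{N/2}=0$ involves only $\alpha_{N/2}$, and for $k=1,\dots,N/2-1$ the equation $\alpha_k^TA\alpha_k+\alpha_{N-k}^TA\alpha_{N-k}=0$ involves only the pair $(\alpha_k,\alpha_{N-k})$; these blocks partition $\{\alpha_0,\dots,\alpha_{N-1}\}$ into two singletons and $N/2-1$ pairs. Hence, fixing $[A]\in\Pro(V^2_M)$ and reindexing $(\C^M)^N\cong\C^M\times\C^M\times(\C^{2M})^{N/2-1}$ accordingly, the fiber $\pi_2^{-1}([A])$ is cut out by $N/2+1$ quadratic forms each depending on a distinct block of coordinates. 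Since $A\neq 0$ every one of these forms is nonzero: $\alpha\mapsto\alpha^TA\alpha$ is the quadratic form of the nonzero symmetric matrix $A$, and setting $\alpha_{N-k}=0$ shows $\alpha_k^TA\alpha_k+\alpha_{N-k}^TA\alpha_{N-k}$ is not identically zero. Because the forms live on disjoint blocks of variables they form a regular sequence, so $\pi_2^{-1}([A])$ has codimension exactly $N/2+1$ in $\Pro^{MN-1}$, i.e.\ projective dimension $(MN-1)-(N/2+1)=MN-N/2-2$; and, exactly as in the proof of Proposition~\ref{dimbmn}, the fact that every fiber is a complete intersection of $N/2+1$ degree-two hypersurfaces of the same multidegree shows $\pi_2$ is flat by Proposition~\ref{prop.flatness}(iii). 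One also checks every fiber is non-empty --- e.g.\ for $M\ge 2$ pick a nonzero $\alpha_0$ with $\alpha_0^TA\alpha_0=0$ and set the remaining $\alpha_i=0$ --- so $\pi_2$ is in fact surjective onto $\Pro(V^2_M)$.

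Combining the two computations via Proposition~\ref{prop.flatness}(iv) gives $\dim\b_{M,N}=2M+MN-N/2-4$. For odd $N$ the only change is that \eqref{eq.dagger} has $(N+1)/2$ equations instead of $N/2+1$ --- the $\alpha_{N/2}$-equation disappears and $k$ ranges over $1,\dots,(N-1)/2$ --- so the frame vectors split into one singleton and $(N-1)/2$ pairs and the same argument gives $\dim\b_{M,N}=2M+MN-(N+3)/2$, in agreement with the bound $N>4M-5$ recorded in the remark above. I do not expect a genuine obstacle here: the argument is essentially the bookkeeping of the fiber dimension plus the immediate non-vanishing of the relevant quadratic forms. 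The one point deserving a careful word is the flatness of $\pi_2$ --- concretely, that the $N/2+1$ forms cutting out a fiber form a regular sequence because they live on disjoint blocks of variables --- which is exactly what the disjoint-block structure of \eqref{eq.dagger} buys us, and what makes this real computation, pleasantly, simpler than its complex analogue (Lemma~\ref{dim}).
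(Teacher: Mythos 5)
Your argument is correct and is essentially the paper's own proof: project to $\Pro(V^2_M)$ (dimension $2M-2$, though the right reference is Lemma~\ref{symdim} rather than \cite[Prop 12.2]{Harris}), observe that the equations \eqref{eq.dagger} involve disjoint blocks of frame vectors so each fiber is a product of nonzero quadric hypersurfaces of projective dimension $MN-N/2-2$, and conclude by flatness via Proposition~\ref{prop.flatness}(iii)--(iv). No gaps.
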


In order to compute the dimension of $\b_{M,N}$ we need to know 
the dimension of $V_M^2$, the locus of complex symmetric matrices of rank at most two.
The following result on the dimensions of loci of symmetric matrices of a given rank
is well known and 
an outline of a proof
is given in \cite[Chapter II, Exercises A1-A2]{acgh}.
\begin{lemma}\label{symdim}
Let $k\leq M-2$. The locus  $V_{M}^k$ of complex symmetric matrices with rank $\leq k$ forms an affine algebraic variety with dimension
$\displaystyle{\dim V_{M}^k = k + {M \choose 2} - {M-k \choose 2}}.$
\end{lemma}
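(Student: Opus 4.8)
\textbf{Proof plan for Lemma \ref{symdim}.}
The strategy is the standard one for determinantal loci: exhibit $V_M^k$ as the image of a dominant morphism from an irreducible incidence variety whose dimension is computable by a fibration argument, then apply Proposition \ref{prop.vakil}. Concretely, I would consider the incidence variety
$$\widetilde{V} = \{(A, W) \mid W \in \mathrm{Gr}(M-k, M),\ W \subseteq \ker A \}$$
sitting inside $\mathbb{C}^{M \times M}_{\mathrm{sym}} \times \mathrm{Gr}(M-k, M)$, where $\mathbb{C}^{M\times M}_{\mathrm{sym}}$ is the $\binom{M+1}{2}$-dimensional space of symmetric matrices. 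The second projection $p_2 \colon \widetilde{V} \to \mathrm{Gr}(M-k,M)$ is surjective, and over a fixed $(M-k)$-dimensional subspace $W$ the fiber is the linear space of symmetric bilinear forms on $\mathbb{C}^M$ that vanish identically on $W$; equivalently, after choosing coordinates so that $W$ is a coordinate subspace, these are symmetric forms supported on the complementary $\mathbb{C}^k$, a linear space of dimension $\binom{k+1}{2}$. Since $\mathrm{Gr}(M-k,M)$ is irreducible of dimension $(M-k)k$ and all fibers of $p_2$ are linear spaces of the same dimension $\binom{k+1}{2}$, the map $p_2$ is flat (Proposition \ref{prop.flatness}(iii), viewing a linear space as a degenerate product of hypersurfaces, or more simply directly) and $\widetilde{V}$ is irreducible of dimension $(M-k)k + \binom{k+1}{2}$.

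Next I would analyze the first projection $p_1 \colon \widetilde{V} \to \mathbb{C}^{M\times M}_{\mathrm{sym}}$. Its image is exactly $V_M^k$: a symmetric matrix $A$ has rank $\le k$ iff $\ker A$ has dimension $\ge M-k$, i.e. iff $\ker A$ contains some $(M-k)$-dimensional subspace. Moreover $p_1$ is generically injective onto its image, since for a symmetric matrix $A$ of rank exactly $k$ the kernel has dimension exactly $M-k$, so there is a unique $W$ with $(A,W) \in \widetilde{V}$. Hence $p_1 \colon \widetilde{V} \to V_M^k$ is a dominant (in fact surjective) morphism of irreducible varieties which is birational onto its image. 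By Proposition \ref{prop.vakil} applied to $p_1$, there is a non-empty open subset of $V_M^k$ over which the fiber has dimension $\dim \widetilde{V} - \dim V_M^k$; since this generic fiber is a point, we conclude
$$\dim V_M^k = \dim \widetilde{V} = (M-k)k + \binom{k+1}{2}.$$
Finally, I would massage this into the form stated in the lemma: using $(M-k)k + \binom{k+1}{2} = k + \binom{M}{2} - \binom{M-k}{2}$, which is a routine identity (both sides count, in two ways, the off-diagonal plus diagonal slots of a symmetric matrix that are "active" in rank $\le k$). One also notes that $V_M^k$ is irreducible, being the image of the irreducible variety $\widetilde{V}$, though the lemma as stated only asserts it is an affine algebraic variety — it is closed since it is cut out by the vanishing of all $(k+1)\times(k+1)$ minors.

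The main obstacle is not conceptual but bookkeeping: one must be careful that the hypothesis $k \le M-2$ is what guarantees $\mathrm{Gr}(M-k,M)$ is positive-dimensional (indeed for $k = M-1$ the formula still holds by a direct check, and for $k=M$ it is the whole space, so the restriction is mild), and one must verify the generic-injectivity claim cleanly — this requires observing that the subset of $V_M^k$ where the rank drops below $k$ is a proper closed subset (it is $V_M^{k-1}$), so the complement is a dense open set over which $p_1$ is bijective. I would also double-check the fiber-dimension computation for $p_2$ by a coordinate argument rather than appealing to flatness abstractly, since the fibers being linear makes irreducibility and dimension of $\widetilde{V}$ transparent via Proposition \ref{prop.flatness}(iv) or just by noting $\widetilde{V}$ is a vector bundle over the Grassmannian.
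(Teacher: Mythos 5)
Your argument is correct, and it is worth noting that the paper itself does not prove Lemma \ref{symdim} at all: it simply cites the exercise outline in Arbarello--Cornalba--Griffiths--Harris. Your incidence construction is essentially the standard argument that those exercises sketch: the variety $\widetilde{V}$ of pairs $(A,W)$ with $W\subseteq\ker A$ is the total space of the vector bundle over $\mathrm{Gr}(M-k,M)$ whose fiber over $W$ is the space of symmetric forms factoring through $\C^M/W$, giving irreducibility and $\dim\widetilde{V}=k(M-k)+\binom{k+1}{2}$, and generic injectivity of the projection to $V_M^k$ (unique kernel when the rank is exactly $k$) together with Proposition \ref{prop.vakil} gives the dimension, which matches $k+\binom{M}{2}-\binom{M-k}{2}$ by the identity you check. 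Two small wording points, neither a gap: the fiber of $p_2$ over $W$ is not the space of symmetric bilinear forms whose restriction to $W$ vanishes (that is a strictly larger linear space, allowing nonzero off-diagonal blocks) but the space of forms with $W$ in the kernel, i.e.\ forms supported on a complement of $W$, which is what you in fact use and which has dimension $\binom{k+1}{2}$; and the hypothesis $k\le M-2$ is not what makes the Grassmannian positive-dimensional (that holds for $k=M-1$ as well, where your computation also reproduces the correct dimension $\binom{M+1}{2}-1$ of the determinantal hypersurface), it is simply the range the paper needs. Your approach also yields irreducibility of $V_M^k$, which the paper does not state but which is harmless extra information.
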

In particular, we see that $\dim V^2_M = 2M-1$. We also note for later use
that 
$\dim V^1_M = M$. 
Much like in the proof of Proposition \ref{dimbmn}, our strategy to
compute the dimension of $\b_{M,N,s}$ is to show that $\pi_2$ is surjective
and compute the dimension of the fibers $\pi_2^{-1}([A])$ for $A  \in \C^{M\times M}_{\mathrm{sym}}$. 
\begin{lemma}\label{real_claim_projection}
  The projection
  $\pi_2 \colon \B_{M,N} \to \Pro(V^2_M)$
  is surjective.
\end{lemma}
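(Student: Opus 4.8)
The plan is to show surjectivity of $\pi_2\colon \b_{M,N} \to \Pro(V^2_M)$ by exhibiting, for every rank-$\leq 2$ symmetric matrix $A$, a frame $\UU = (\alpha_0, \ldots, \alpha_{N-1})$ satisfying the system \eqref{eq.dagger}. Since the conditions in \eqref{eq.dagger} are separate linear constraints on the individual vectors $\alpha_j$ (the quadratic form $\alpha \mapsto \alpha^T A \alpha$ applied to one or two frame vectors at a time), the construction can be done vector by vector. Concretely, $\alpha_0$ must lie on the quadric hypersurface $\{\alpha^T A \alpha = 0\}$ in $\C^M$, which is non-empty (indeed positive-dimensional) whenever $M \geq 1$ and $A \neq 0$; same for $\alpha_{N/2}$; and for each pair $(\alpha_k, \alpha_{N-k})$ with $1 \leq k \leq N/2 - 1$ we need $\alpha_k^T A \alpha_k = -\alpha_{N-k}^T A \alpha_{N-k}$, which is one quadratic equation in the $2M$ coordinates of the pair and visibly has solutions (e.g. first pick $\alpha_k$ arbitrarily, then solve the resulting single homogeneous quadratic equation for $\alpha_{N-k}$, which is solvable over $\C$ since a quadratic form in $M \geq 1$ variables over an algebraically closed field always has a nontrivial zero, or is identically zero). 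One must also ensure the resulting collection spans $\C^M$, i.e. actually forms a frame; this is easy because each of the quadric/isotropy conditions cuts out a hypersurface or codimension-one set, so there is ample freedom to choose the $\alpha_j$ to additionally span $\C^M$ as long as $N \geq M$ (which holds in the regime $N > 4M - 6$).

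The key steps, in order, would be: (1) unwind the definition of $\b_{M,N}$ and observe that a point of the fiber $\pi_2^{-1}([A])$ is exactly a tuple $[\alpha_0, \ldots, \alpha_{N-1}]$ solving \eqref{eq.dagger}; (2) fix an arbitrary nonzero $[A] \in \Pro(V^2_M)$ and note that since $A$ is symmetric of rank $\leq 2$ it is not the zero matrix, so the quadratic form $Q_A(\alpha) = \alpha^T A \alpha$ on $\C^M$ is a nonzero form (here I should double-check that rank $\geq 1$ forces $Q_A \not\equiv 0$: if $A$ is symmetric and $\alpha^T A \alpha = 0$ for all $\alpha$ then by polarization $A = 0$, so yes); (3) choose $\alpha_0$ and $\alpha_{N/2}$ to be nonzero isotropic vectors of $Q_A$, which exist since a nonzero quadratic form over $\C$ in $M \geq 2$ variables has nontrivial zeros — and when $M = 1$ the rank-$\leq 2$ condition is vacuous but then $A = (a)$ with $a \neq 0$ forces $\alpha_0 = 0$, contradicting the frame condition, so actually $M \geq 2$ is needed; this edge case should be flagged, and in the relevant sparsity regime $M \geq 2$; (4) for each pair index $k$, choose $\alpha_k$ freely (nonzero) and then solve $Q_A(\alpha_{N-k}) = -Q_A(\alpha_k)$, an inhomogeneous quadratic equation with a solution over $\C$; (5) verify that these choices can be made so that $\{\alpha_j\}$ spans $\C^M$, using that we are only imposing one scalar equation per vector (or per pair) and $N > M$; (6) conclude $[A] \in \pi_2(\b_{M,N})$.

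The main obstacle — really the only subtlety — is making sure the constructed tuple genuinely lies in $\b_{M,N}$, i.e. that it is an honest frame (spans $\C^M$) rather than a degenerate tuple; one has to check that the isotropy and balancing constraints are loose enough to leave room to span, which is a codimension count: each constraint drops dimension by one, the total parameter space for the $\alpha_j$ has dimension $MN$, and spanning fails only on a proper closed subset, so as long as the constraint locus is not contained in the non-spanning locus — true since $N > M$ — a valid frame exists. A secondary point to handle carefully is the small-$M$ or degenerate-$A$ boundary cases (e.g. $A$ of rank exactly $1$, or $M$ small), but these do not cause real trouble in the parameter range of the theorem. I also note that the analogous surjectivity statement in the complex case (the Lemma preceding Proposition \ref{dimbmn}) was proved by essentially this pick-isotropic-vectors argument, so this is a direct real/symmetric-form adaptation of a technique already in the paper and in \cite{conca2015algebraic}.
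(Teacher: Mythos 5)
Your proposal is correct and takes essentially the same approach as the paper: choose isotropic vectors of the (nonzero, hence not identically vanishing) quadratic form $Q_A(\alpha)=\alpha^T A\alpha$ to satisfy \eqref{eq.dagger}; the paper simply picks a single isotropic $\alpha$ and sets $\alpha_k=\alpha$ for all $k$. Your extra step ensuring the tuple spans $\C^M$ is not actually required, since $\b_{M,N}$ is defined inside $\Pro\left((\C^M)^N\right)\times\Pro(V^2_M)$ by the equations \eqref{eq.dagger} alone with no frame condition --- which is exactly why the paper's repeated-vector construction suffices.
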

\begin{proof}
For any $M\times M$ complex symmetric matrix $A$ with rank $\leq 2$,
pick some $\alpha \in \C^M$ such that $\alpha^T A \alpha = 0$. This
can be done since $u^T A u = 0$ is a non-degenerate homogeneous
quadratic equation in the entries of $u$, and the relation thus
defines a quadric hypersurface. Any point in this hypersurface
gives a choice of $\alpha$. Let $\alpha_k=\alpha$ for all $k$,
which gives us the desired point in $\b_{M,N,s}$ whose is image
under $\pi_2$ is $[A]$.
\end{proof}

\begin{proof}[Proof of Proposition \ref{real_dimbmn}]
From Lemmas \ref{symdim}, \ref{real_claim_projection}, we see that
$\dim(\pi_2(\b_{M,N,s})) = 2M-2$. Now, we focus on the fibers. As in the proof
Lemma \ref{dim}, we notice that for any fixed symmetric matrix $A$, the equations defined
by the system \eqref{eq.dagger} are all non-zero and algebraically
independent since  no
two equations are defined by polynomials in the same
variables. The first two equations are non-zero quadratic
equations in $M$ variables, and thus define 
quadric hypersurfaces in $\C^M$, while the other equations each define quadric hypersurfaces in $\C^{2M}$. After projectivizing
we see that 
$$\dim(\pi_2^{-1}([A])) =
2(M-1)+\left(\frac{N}{2} -1\right)(2M-1) - 1 = MN - \frac{N}{2}-2$$
and that each fiber is a product of hypersurfaces.
This implies that the map
$\pi_2 \colon \B_{M,N} \to \Pro(V^2_M)$ is flat
and we conclude that every irreducible component of
$\B_{M,N}$ has dimension $2M-2 + MN - N/2 -2$.
\end{proof}

\begin{theorem}\label{real_genericL}
For a generic frame $\UU=(\alpha_0, \ldots, \alpha_{N-1})$ condition (*) holds when $N>4M-6$
\end{theorem}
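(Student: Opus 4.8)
The plan is to run the standard dimension-count argument in the style of \cite{conca2015algebraic}, now that Proposition \ref{real_dimbmn} has computed $\dim \b_{M,N}$. By Proposition \ref{real_injectiveiff}, condition (*) fails for the frame $\UU = (\alpha_0,\ldots,\alpha_{N-1})$ precisely when the corresponding point of $\Pro((\C^M)^N)$ lies in $\pi_1((\b_{M,N})_\R)$, which is contained in $(\pi_1(\b_{M,N}))_\R$. So it suffices to show that when $N > 4M-6$ the image $\pi_1(\b_{M,N})$ is contained in a proper Zariski-closed subset of $\Pro((\C^M)^N)$; then its real points are a proper real-algebraic subset, and the complement — which contains an honest frame since full-rank tuples are Zariski dense — is a nonempty Zariski open set of frames on which (*) holds.

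The key inequality is a comparison of dimensions: since $\pi_1$ is a morphism of projective varieties, $\dim \pi_1(\b_{M,N}) \le \dim \b_{M,N} = 2M + MN - N/2 - 4$ by Proposition \ref{real_dimbmn} (using \cite[Cor.\ 11.13]{Harris}, as in the proof of Theorem \ref{genericpairsab}). The ambient space $\Pro((\C^M)^N)$ has dimension $MN - 1$. One checks that
\begin{equation*}
2M + MN - \frac{N}{2} - 4 < MN - 1 \iff 2M - 3 < \frac{N}{2} \iff N > 4M - 6,
\end{equation*}
which is exactly the stated hypothesis. Hence under $N > 4M-6$ the image of $\pi_1$ has strictly smaller dimension than the ambient projective space, so it lies in a proper closed subvariety. (For odd $N$ the fiber dimension computation in Proposition \ref{real_dimbmn} uses $(N-1)/2$ two-variable quadrics rather than $N/2 - 1$, replacing $2M-3 < N/2$ with the shifted bound $N > 4M-5$, as noted in the remark preceding Section \ref{sec.secondmoment}.)

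To finish, I would transfer this back to the real setting carefully. The variety $\b_{M,N}$ is cut out by the real equations \eqref{eq.dagger} (the matrix $A$ ranges over complex symmetric rank-$\le 2$ matrices, and the defining polynomials have real coefficients), so $\pi_1(\b_{M,N})$ is defined over $\R$; therefore $(\pi_1(\b_{M,N}))_\R$ is a proper real-algebraic subset of $\Pro(\R^M)^N$, i.e.\ it is nowhere dense and its complement is Zariski open and nonempty. A generic real frame lies in this complement, and by Proposition \ref{real_injectiveiff} condition (*) holds for it. I expect the only genuinely delicate point to be the bookkeeping between the complex incidence variety and its real locus — specifically, making sure that ``$\pi_1$ has small image over $\C$'' really does force ``the real frames avoiding the bad set are Zariski dense,'' which hinges on the fact (recorded in Section 2) that a proper complex subvariety defined over $\R$ meets $\R^n$ in a proper real subvariety, together with the density of full-rank real matrices. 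Everything else is the routine dimension arithmetic above.
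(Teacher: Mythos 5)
Your proposal is correct and follows essentially the same route as the paper: invoke Proposition \ref{real_injectiveiff}, bound $\dim \pi_1(\b_{M,N})$ by $\dim \b_{M,N} = 2M+MN-N/2-4$ from Proposition \ref{real_dimbmn}, and compare with $\dim \Pro((\C^M)^N)=MN-1$ to get exactly the threshold $N>4M-6$. Your added care about the real/complex bookkeeping and the odd-$N$ adjustment is sound but goes beyond what the paper spells out.
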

\begin{proof}
We know from Proposition \ref{real_injectiveiff} that condition (*)
holds if and only if
$$(\alpha_0, \ldots, \alpha_{N-1}) \not\in
\pi_1((\b_{M,N,s})_\R) \subset \pi_1(\b_{M,N,s})_\R.$$
Since the
dimension of the image of a projection is at most the dimension of the
source, we see that $\dim(\pi_1(\b_{M,N,s})) \leq 2M + MN -N/2 -4$.
Finally, we see that when $N>4M-6$, $$2M + MN- \frac{N}{2}-4 < MN-1 =
\dim \Pro\left((\C^M)^N\right).$$ Thus, the points of $\b_{M,N,s}$ for
which condition (*) fails lie in a proper algebraic subset.
\end{proof}

This proves (F1) and we now proceed to prove (F2) and
the corresponding statements (Fg).
Our proof is similar to the proof we gave in Section\ref{complexcase}, so
we refrain from repeating a lot of
arguments.

Suppose that $\UU = (\alpha_0,\ldots \alpha_{N-1})$ and  $\VV =(\beta_0, \ldots , \beta_{N-1})$ are
two $s$-overlapping frames on $\R^N$
and consider the map 
 $$ \B_{\UU\VV}: (\R^M/\pm 1) \times (\R^M/\pm 1) \to \R^N,
    (x,y) \mapsto b_{L_{\UU}(x)} - b_{L_{\VV}}(y)$$
Our goal is to show is to show that if $N > 4M-4$ then
for a generic pair of $s$-overlapping frame $\UU, \VV$\\
\centerline{(**) $\mathcal{A}_{\UU \VV}(x,y) = 0$ if and only if $x =y$.}

As before, we can extend $\B_{\UU\VV}$ to linear map
 $\B_{\UU \VV} \colon \R^{M \times M}_{\textrm{sym}} \times \R^{M \times M}_{\textrm{sym}} \to \R^N$ given by
\begin{gather*}  (A, B) \mapsto (\alpha_0^TA\alpha_0 - \beta_0^T B \beta_0,
  \alpha_{N/2}^T A \alpha_{N/2} - \beta_{{N\over{2}}}^T B \beta_{{N\over{2}}},
  \alpha_1^TA \alpha_1 + \alpha_{N-1}^T A \alpha_{N-1} - \beta_1^TB \beta_1
  \beta_{N-1}^T B \beta_{N-1},\\ \ldots , \alpha_{{N\over{2}}-1}^TA \alpha_{{N\over{2}}-1} +
  \alpha_{{N\over{2}}+1}^T A \alpha_{{N\over{2}}+1} - \beta_{{N\over{2}}+1}^TB \beta_{{N\over{2}}+1}
    -    \beta_{N/2+1}^T B \beta_{N/2+1})
\end{gather*}
Let ${\mathbb U}$ be the quasi-affine subvariety of $\R^{M \times M}_{\textrm{sym}} \times \R^{M \times M}_{\textrm{sym}}$ parametrizing
    pairs $(A,B)$ of distinct rank-one symmetric matrices. As before we observe that the following holds.
\begin{proposition}\label{injcondition}
Condition (**) holds if and only if ${\mathbb U}$ does not intersect
  the kernel of the linear map $\mathcal{A}_{\UU \VV} \colon \R^{M \times M}_{\textrm{sym}} \times \R^{M \times M}_{\textrm{sym}} \to \R^N$.
\end{proposition}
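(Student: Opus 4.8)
The proposition is the real, two--frame counterpart of Proposition \ref{cxinjcondition}, and is proved exactly like it (and like the one--frame version, Proposition \ref{real_injectiveiff}), by unwinding definitions in the spirit of \cite[Lemma 9]{bandeira2014saving} and the discussion preceding Proposition \ref{real_lemma9}. The first step is the linearization that is already implicit in the statement: for $x\in\R^M$ and any $\alpha\in\R^M$ the scalar $\alpha^T xx^T\alpha$ depends linearly on the rank--one symmetric matrix $xx^T$, so each coordinate of $b_{L_{\UU}(x)}$ is the value of a fixed linear functional at $xx^T$. Hence, writing $\mathcal{A}_{\UU\VV}\colon\R^{M\times M}_{\mathrm{sym}}\times\R^{M\times M}_{\mathrm{sym}}\to\R^N$ for the linear extension of $\B_{\UU\VV}$ displayed just above the proposition, we get $\B_{\UU\VV}(x,y)=\mathcal{A}_{\UU\VV}(xx^T,yy^T)$, and therefore $\B_{\UU\VV}(x,y)=0$ if and only if $(xx^T,yy^T)\in\ker\mathcal{A}_{\UU\VV}$.

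Next I would invoke the bijection $x\mapsto xx^T$ from $\R^M/\{\pm1\}$ onto the set of positive semidefinite symmetric matrices of rank at most one. Under this identification the failure of condition (**) means there exist $x\neq y$ in $\R^M/\{\pm1\}$, equivalently $xx^T\neq yy^T$, with $(xx^T,yy^T)\in\ker\mathcal{A}_{\UU\VV}$. I would dispose of the degenerate possibility that one of $x,y$ is zero: every frame vector $\alpha_j$ appears in some coordinate of $b_{L_{\UU}(\cdot)}$, and each such coordinate is a sum of squares of the $\langle\alpha_j,\cdot\rangle$, so $b_{L_{\UU}(x)}=0$ forces $\langle\alpha_j,x\rangle=0$ for every $j$ and hence $x=0$, since $\UU$ spans $\R^M$; likewise for $\VV$. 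Thus if $\B_{\UU\VV}(x,y)=0$ and one of $x,y$ vanishes, so does the other, and any witnessing pair has $x,y\neq 0$. Consequently, failure of (**) is precisely the existence of a pair of distinct, rank--one, positive semidefinite symmetric matrices in $\ker\mathcal{A}_{\UU\VV}$.

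It remains to reconcile this with $\mathbb U$, which by definition consists of pairs of distinct rank--one symmetric matrices, not necessarily positive semidefinite. Given $(A,B)=(\varepsilon vv^T,\varepsilon' ww^T)\in\mathbb U\cap\ker\mathcal{A}_{\UU\VV}$ with $\varepsilon,\varepsilon'\in\{\pm1\}$ and $v,w\neq 0$, the vanishing of $\mathcal{A}_{\UU\VV}(A,B)=\varepsilon\,b_{L_{\UU}(v)}-\varepsilon'\,b_{L_{\VV}(w)}$ forces $\varepsilon=\varepsilon'$: otherwise a coordinatewise nonnegative vector would equal a coordinatewise nonpositive one, both would vanish, and the spanning argument above would give $v=0$, a contradiction. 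Multiplying through by $\varepsilon$ then exhibits the distinct rank--one positive semidefinite pair $(vv^T,ww^T)$ in $\ker\mathcal{A}_{\UU\VV}$; conversely any such positive semidefinite pair already lies in $\mathbb U$. Combining the three steps and taking contrapositives yields the claimed equivalence between (**) and $\mathbb U\cap\ker\mathcal{A}_{\UU\VV}=\emptyset$. I do not expect a genuine obstacle here: as with Proposition \ref{cxinjcondition}, everything follows from the linearization, and the only point requiring a line of care is the sign bookkeeping in the last step, where the positive semidefiniteness of the matrices $xx^T$ --- equivalently, the sum--of--squares form of the coordinates of $b$ --- is what is used.
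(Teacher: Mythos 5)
Your proof is correct and takes essentially the route the paper intends: the paper states this proposition without proof (``As before we observe that the following holds''), deferring to the same linearization $x\mapsto xx^T$ that underlies Proposition~\ref{cxinjcondition} and Proposition~\ref{real_lemma9}. Your sign bookkeeping --- reconciling the pairs of arbitrary rank-one \emph{symmetric} matrices in ${\mathbb U}$ with the positive semidefinite ones $xx^T$ actually produced by the lift, via the nonnegativity of the coordinates of $b$ and the spanning property of the frames --- is exactly the detail the paper leaves implicit, and you handle it correctly.
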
  
Thus, to prove statement (F2) and complete the proof of Theorem \ref{thm.realsparse}(i)
we must prove the following result.
\begin{theorem}\label{real_genericpairs}
If $N > 4M-4$ and $\UU$ and $\VV$ are generic frames $s$-overlapping frames then the set ${\mathbb U}$ does not intersect the kernel
  of the linear map $\mathcal{B}_{\UU\VV} \colon \R^{M \times M}_{\textrm{sym}} \times \R^{M \times M}_{\textrm{sym}} \to \R^N$
\end{theorem}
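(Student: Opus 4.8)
The plan is to mimic the proof of Theorem \ref{genericpairsab} from the complex case, building a projective incidence variety whose dimension bounds the dimension of the "bad" locus of frames. Specifically, let $V^1_M$ (resp. $V^2_M$) denote the affine variety of real (or complex) symmetric matrices of rank $\leq 1$ (resp. $\leq 2$); note that if $(A,B)$ is a pair of \emph{distinct} rank-one symmetric matrices lying in $\ker \mathcal{B}_{\UU\VV}$, then $A - B$ is a nonzero symmetric matrix of rank $\leq 2$. First I would reformulate condition (**) using Proposition \ref{injcondition}: it fails exactly when some pair of distinct rank-one symmetric matrices lies in the kernel. Then I would set up the incidence variety
$$\mathcal{C}_{M,N,s} \subseteq \Pro(\Lambda_s^{\R}) \times \Pro(V^1_M \times V^1_M)$$
consisting of tuples $([U,V],[A,B])$ with $A \neq B$, both $A,B$ of rank one, and satisfying the $N/2+1$ quadratic equations obtained by pairing the system \eqref{eq.dagger}-style constraints for $\mathcal{B}_{\UU\VV}$, where $\Lambda_s$ now parametrizes pairs of $s$-overlapping \emph{real} frames (a linear space of dimension $MN + (M-s)N = 2MN - sN$ over $\R$, or rather its complexification). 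As in Proposition \ref{injectiveiff}, condition (**) fails for $(\UU,\VV)$ iff $(\UU,\VV) \in \pi_1((\mathcal{C}_{M,N,s})_{\R})$.

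Next I would compute $\dim \mathcal{C}_{M,N,s}$ by projecting onto the matrix factor $\pi_2$. The surjectivity of $\pi_2$ onto $\Pro(V^1_M \times V^1_M)$ follows as in Lemma \ref{real_claim_projection}: given distinct rank-one $A, B$, choose each frame vector $\alpha_k$ (and $\beta_k$) to lie on the relevant quadric, taking all $\alpha_k$ equal and all $\beta_k$ equal so that the overlap condition is automatic. For the fiber dimension, fix $[A,B]$; the system consists of $N/2+1$ quadratic equations, the first two in $M$ (resp.\ $M$) of the frame-vector variables and the remaining $N/2-1$ each in $2M$ variables, and by the rank-two analogue of Lemma \ref{dim} (applied to $A-B \neq 0$ via \cite[Lemma 3.5]{conca2015algebraic}, over $\R$ each such form is a nonzero real quadratic) these are algebraically independent, so each fiber is a product of $N/2+1$ quadric hypersurfaces and hence $\pi_2$ is flat by Proposition \ref{prop.flatness}(iii). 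Since $\dim \Pro(V^1_M \times V^1_M) = 2M - 2$ (using $\dim V^1_M = M$ as recorded after Lemma \ref{symdim}) and the generic fiber has dimension $[2(M-1) + (N/2-1)(2M-1)] - 1$, Proposition \ref{prop.flatness}(iv) gives
$$\dim \mathcal{C}_{M,N,s} = (2M-2) + 2M - 2 + \left(\tfrac{N}{2}-1\right)(2M-1) - 1 = MN + 4M - \tfrac{N}{2} - 5,$$
subject to checking the arithmetic (I would recompute carefully; the bound $N > 4M-4$ should be exactly what forces this to be strictly less than $\dim \Pro(\Lambda_s^{\R})$).

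Finally I would conclude: $\dim \pi_1(\mathcal{C}_{M,N,s}) \leq \dim \mathcal{C}_{M,N,s}$, and comparing with $\dim \Pro(\Lambda_s^{\R}) = 2MN - sN - 1$, when $N > 4M-4$ the projection lands in a proper algebraic subset, so for a generic $s$-overlapping real pair $(\UU,\VV)$ the kernel of $\mathcal{B}_{\UU\VV}$ contains no pair of distinct rank-one symmetric matrices, i.e.\ (**) holds. Restricting the complex incidence variety to its real points, as in the end of the proof of Theorem \ref{generica}, transfers the conclusion back to honest real frames and honest rank-one real symmetric matrices. The main obstacle I expect is the dimension bookkeeping: one must correctly account for the $s$-overlap in $\Lambda_s$ (the overlapping rows contribute to both $\UU$ and $\VV$ simultaneously, reducing the parameter count) and verify that the fiber-dimension formula is uniform across $\Pro(V^1_M \times V^1_M)$ — in particular that even in the degenerate situation where $\mathrm{rank}(A-B) = 1$ rather than $2$, the quadratic forms in \eqref{eq.dagger} are still nonvanishing, which is where the rank-two generalization of Lemma \ref{dim} (equivalently \cite[Lemma 3.5]{conca2015algebraic}) does the real work. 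A secondary subtlety, exactly as flagged in the Remark after Lemma \ref{bz}, is the parity of $N$: for odd $N$ the function $b_z$ has $(N+1)/2$ components, the "$\alpha_{N/2}$" equation disappears, and the fiber dimension shifts by one, replacing $4M-4$ with $4M-3$ in the statement.
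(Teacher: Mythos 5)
Your strategy is exactly the paper's: form the incidence variety in $\Pro(\Lambda_s)\times\Pro(V^1_M\times V^1_M)$ cut out by the $N/2+1$ paired quadratic conditions, prove $\pi_2$ is surjective with equidimensional fibers that are products of quadric hypersurfaces, invoke flatness to get the total dimension, and compare with $\dim\Pro(\Lambda_s)=2MN-sN-1$. The structure is sound and the nondegeneracy point you isolate (that $\alpha^TA\alpha-\beta^TB\beta$ restricted to the overlap locus is nonvanishing, reducing when $s=M$ to $\alpha^T(A-B)\alpha\not\equiv 0$ for the rank-$\leq 2$ matrix $A-B\neq 0$) is precisely what the paper uses.

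However, your dimension bookkeeping is off in two concrete places, and as written the numbers do not yield the bound. First, $\dim\Pro(V^1_M\times V^1_M)=2M-1$, not $2M-2$: the affine cone $V^1_M\times V^1_M$ has dimension $2M$ and projectivization subtracts one. Second, and more importantly, each equation of the system couples the $\alpha$'s \emph{and} the $\beta$'s, so the ambient variable space for each quadric is that of a pair of frame vectors modulo the overlap: the $k=0$ and $k=N/2$ equations are quadrics in $2M-s$ variables (dimension $2M-s-1$ each), and each of the remaining $N/2-1$ equations is a quadric in $4M-2s$ variables (dimension $4M-2s-1$ each). You instead used the single-frame counts $M$ and $2M$ from Proposition \ref{real_dimbmn}. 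The correct fiber dimension after projectivizing is $2(2M-s-1)+(\tfrac{N}{2}-1)(4M-2s-1)-1=2MN-\tfrac{N}{2}-sN-2$, whence $\dim\b_{M,N,s}=(2M-1)+(2MN-\tfrac{N}{2}-sN-2)=2MN+2M-\tfrac{N}{2}-sN-3$. Comparing with $\dim\Pro(\Lambda_s)=2MN-sN-1$, the projection is proper exactly when $2M-\tfrac{N}{2}-2<0$, i.e.\ $N>4M-4$, as claimed. You flagged the arithmetic as provisional and correctly identified the overlap accounting as the delicate point, so this is a fixable slip rather than a conceptual gap, but your stated formula $MN+4M-\tfrac{N}{2}-5$ would not produce the theorem's threshold.
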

Likewise, statement (Fg) or, equivalently, Theorem \ref{thm.realsparse}(ii) follows
from the following result.
\begin{theorem}\label{real_generica}
  Suppose $N > 2M-2$ and $\UU,\VV$ are generic pair of $s$-overlapping frames.
Then if $A \in V^1_M$ is generic there is no 
$B \neq A \in V^1_M$ such that
$\B_{\UU\VV}(A,B) = 0$.
\end{theorem}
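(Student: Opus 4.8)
The plan is to mirror the proof of Theorem \ref{generica} from the complex case, replacing the space of rank-one Hermitian matrices with the space $V^1_M$ of real symmetric rank-one matrices and working with the simplified quadratic system \eqref{eq.dagger} afforded by the second-moment reduction. First I would set up the incidence variety: for $s$-overlapping frames $(\UU,\VV)$ parametrized by a quasi-affine open subset of a linear space $\Lambda_s \subset (\C^M)^N \times (\C^M)^N$ (of dimension $2MN - sN$, since the first $s$ frame vectors agree), consider
$$\b'_{M,N,s} \subseteq \Pro(\Lambda_s) \times \Pro(V^1_M \times V^1_M)$$
consisting of tuples $([\UU,\VV],[A,B])$ with $A \neq B$, both of rank one, satisfying the analogue of \eqref{eq.dagger}: $\alpha_0^TA\alpha_0 = \beta_0^TB\beta_0$, $\alpha_{N/2}^TA\alpha_{N/2}=\beta_{N/2}^TB\beta_{N/2}$, and $\alpha_k^TA\alpha_k + \alpha_{N-k}^TA\alpha_{N-k} = \beta_k^TB\beta_k + \beta_{N-k}^TB\beta_{N-k}$ for $k=1,\dots,N/2-1$. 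This is quasi-projective by the same reasoning as for $\b_{M,N,s}$. As in the complex case, the key dichotomy is that either $\pi_1$ fails to be dominant onto $\Pro(\Lambda_s)$ — in which case the generic pair of frames already has no bad pair $(A,B)$ and we are done — or $\pi_1$ is dominant, and we must analyze a generic fiber.

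Next I would compute $\dim \b'_{M,N,s}$ via the projection $\pi_2$ to $\Pro(V^1_M \times V^1_M)$. The surjectivity of $\pi_2$ onto the locus of distinct rank-one pairs follows by taking all $\alpha_k$ equal to a common solution of $u^TAu = 0$ (adjusting for the $s$-overlap exactly as in Lemma \ref{real_claim_projection}), so $\dim \pi_2(\b'_{M,N,s}) = 2\dim\Pro(V^1_M) = 2(M-1)$. For the fibers: fixing $(A,B)$, the system becomes $N/2+1$ quadratic equations — two in $M$ variables and $N/2-1$ in $2M$ variables (with $s$ linear identifications imposed by the overlap) — and by the rank-one analogue of Lemma \ref{dim} each imposes exactly one independent condition, so each fiber is a product of quadric hypersurfaces of dimension $2(M-1) + (N/2-1)(2M-1) - (\text{overlap correction}) - 1$. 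This makes $\pi_2$ flat by Proposition \ref{prop.flatness}(iii), and Proposition \ref{prop.flatness}(iv) then gives $\dim \b'_{M,N,s}$ exactly. Assuming $\pi_1$ is dominant, Proposition \ref{prop.vakil} applied to each dominating component yields, for generic $q \in \Pro(\Lambda_s)$, a fiber $\pi_1^{-1}(q) = \Gamma$ of dimension $\dim \b'_{M,N,s} - \dim \Pro(\Lambda_s)$, which one computes to be $4M - N - 3$ (with $4M-N-2$ in the odd case, matching the stated bound shift $2M-1 \to 2M-2$). Projecting $\Gamma$ to the first factor $\Pro(V^1_M)$, the closure $\overline{p_1(\Gamma)}$ is a proper subvariety precisely when $\dim \Pro(V^1_M) = M-1 > 4M-N-3$, i.e. when $N > 3M - 2$; this is weaker than needed, so I would instead — as in the complex proof — use that $\dim \Pro(V^1_M \times V^1_M) = 2M-2$ and that $\Gamma$ being a proper subvariety of each fiber requires $2M - 2 > 4M - N - 3$, giving $N > 2M - 1$, hence $N > 2M-2$ suffices for the claimed "generic $A$" statement once we fix the frames. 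Finally I would restrict to the real points: $\Lambda_s(\R)$ corresponding to genuine $s$-overlapping real frames and $V^1_M(\R)$ corresponding to real symmetric rank-one matrices, concluding that for a generic real pair $(\UU,\VV)$ and generic $A \in V^1_M$ there is no $B \neq A$ in $V^1_M$ with $\B_{\UU\VV}(A,B) = 0$.

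The main obstacle I anticipate is the rank-one analogue of Lemma \ref{dim} in the presence of the dihedral-symmetric quadratic system rather than the frame-evaluation system: one must verify that each of the forms $\alpha_k^TA\alpha_k + \alpha_{N-k}^TA\alpha_{N-k} - \beta_k^TB\beta_k - \beta_{N-k}^TB\beta_{N-k}$ is genuinely non-vanishing as a polynomial in the relevant variables when $A \neq B$, including the overlap case where some $\alpha$'s are identified with $\beta$'s. When the overlap forces $\alpha_k = \beta_k$ for an index $k \le s$, the form collapses to $\alpha_k^T(A-B)\alpha_k + (\text{terms in the free variables})$, and non-vanishing should follow because $A - B \neq 0$ forces $u^T(A-B)u$ to be a nonzero quadratic form (the argument of \cite[Lemma 3.5]{conca2015algebraic}, or simply that a symmetric matrix $C$ with $u^TCu \equiv 0$ must be zero), but one has to handle carefully the pairs where both $\alpha_k, \alpha_{N-k}$ may or may not be identified with the corresponding $\beta$'s. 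A secondary point requiring care is bookkeeping the dihedral parity cases ($N$ even versus odd, and the role of the fixed one-dimensional components $V_0, V_{N/2}$), which is what produces the $4M-6$ versus $4M-5$ and $2M-2$ versus $2M-1$ distinctions in the final bounds.
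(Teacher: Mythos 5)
Your overall strategy is the paper's: set up the incidence variety in $\Pro(\Lambda_s)\times\Pro(V^1_M\times V^1_M)$, compute its dimension via the (flat, surjective) projection $\pi_2$, reduce to the case that $\pi_1$ is dominant, bound the generic fiber of $\pi_1$ by Proposition \ref{prop.vakil}, and project that fiber to the first factor $\Pro(V^1_M)$. However, there is a genuine error in the one computation that matters. The generic fiber of $\pi_1$ has dimension
$$\dim\b_{M,N,s}-\dim\Pro(\Lambda_s)=\left(2MN+2M-\tfrac{N}{2}-sN-3\right)-\left(2MN-sN-1\right)=2M-\tfrac{N}{2}-2,$$
not $4M-N-3$; you have carried over the complex-case number, which belongs to a system of $N$ equations rather than the $N/2+1$ equations produced by the dihedral second-moment reduction. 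With the correct value, comparing with $\dim\Pro(V^1_M)=M-1$ gives $2M-\tfrac{N}{2}-2<M-1$ exactly when $N>2M-2$, which is the stated bound --- no detour is needed.

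The detour you then take to rescue the (incorrect) count $4M-N-3$ is itself invalid on two grounds. First, showing that $\Gamma$ is a proper subvariety of $\Pro(V^1_M\times V^1_M)$ does not prove the generic-$A$ statement: the theorem asserts that for generic $A$ there is \emph{no} admissible $B$, which requires $\overline{p_1(\Gamma)}$ to be a proper subvariety of $\Pro(V^1_M)$, i.e.\ a comparison with $M-1$, not with $2M-2$. Second, the inequality logic is reversed: if an argument requires $N>2M-1$, then the weaker hypothesis $N>2M-2$ does not suffice; the implication goes the other way. Your remaining concerns (the non-vanishing of the forms $\alpha_k^TA\alpha_k+\alpha_{N-k}^TA\alpha_{N-k}-\beta_k^TB\beta_k-\beta_{N-k}^TB\beta_{N-k}$ under the overlap identifications, and the even/odd bookkeeping) are legitimate and are handled in the paper exactly as you anticipate, via the argument of Lemma \ref{dim} and the fact that a symmetric $C$ with $u^TCu\equiv 0$ is zero.
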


Define
$$\b_{M,N,s} \subseteq \Pro(\Lambda_s) \times \Pro(V_M^1 \times V_M^1)$$ as the set of all tuples
$$([\alpha_0, \ldots , \alpha_{N-1}, \beta_0, \ldots , \beta_{N-1}],  [A ,  B] )$$
satisfying
\begin{equation} \label{eq.secondtime}
\begin{array}{l}
  \alpha_0^T A \alpha_0  = \beta_0^T B \beta_0\\
\alpha_{N/2}^T A \alpha_{N/2}  = \beta_{N/2}^T B \beta_{N/2}\\
\alpha_k^T A \alpha_{k} + \alpha_{N-k}^T A \alpha_{N-k}  = \beta_k^T B \beta_{k} + \beta_{N-k}^T B \beta_{N-k}\;
 k = 1,\dots, \frac{N}{2}-1
\end{array}
\end{equation}
Given our definition, 
the kernel of the linear map $\B_{\UU\VV}$ does not contain any distinct pairs of rank-one symmetric matrices if and only if
$[\alpha_0,  \dots, \alpha_N, \beta_0, \ldots , \beta_N] \not\in \pi_1((\b_{M,N,s})_\R)$

\begin{lemma}\label{claim_real_pairs}
  The projection $\pi_2$ is surjective; i.e. every pair $[( A ,  B )] \subseteq
  \Pro(V_M^1 \times V_M^1)$ with $ A \neq  B $ belongs to the projection $\pi_2(\b_{M,N,s})$. 
\end{lemma}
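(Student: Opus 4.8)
The plan is to mimic the proof of the analogous surjectivity statement in the complex case (the unnamed lemma preceding Proposition~\ref{dimbmn}, and also Lemma~\ref{real_claim_projection}): given an arbitrary pair $[(A,B)] \in \Pro(V_M^1 \times V_M^1)$ with $A \neq B$ rank-one symmetric matrices, I want to exhibit a frame pair $(\UU,\VV)$ of $s$-overlapping frames (i.e.\ a point of $\Pro(\Lambda_s)$) such that the system \eqref{eq.secondtime} is satisfied and $(\UU,\VV,A,B)$ genuinely lies in $\b_{M,N,s}$. The natural choice, exactly as before, is to take all the frame vectors equal to a single well-chosen vector: set $\alpha_j = u$ for every $j$ and $\beta_j = w$ for every $j$, where $u,w \in \C^M$ are to be determined. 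With this choice each of the $N/2+1$ equations in \eqref{eq.secondtime} collapses: the first becomes $u^T A u = w^T B w$, the second becomes $u^T A u = w^T B w$ again, and each of the middle ones becomes $2 u^T A u = 2 w^T B w$, i.e.\ the whole system reduces to the single quadratic condition $u^T A u = w^T B w$.

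So the task is to find $u, w$ with $u^T A u = w^T B w$, and then to handle the overlap constraint. For the overlap: when $s < M$, I should instead choose $u$ and $w$ to have their first $s$ coordinates equal (e.g.\ all equal to zero there, as in Lemma~\ref{real_claim_projection} and the complex lemma), so that the first $s$ rows of the two $M\times N$ matrices $A_\UU$ and $A_\VV$ — each of which is the constant matrix with column $u$ resp.\ $w$ — agree, placing the point in $\Pro(\Lambda_s)$. When $s = M$ we just need $u^T A u = w^T B w$ with no extra restriction, and when $s = 0$ there is no overlap constraint at all. To solve $u^T A u = w^T B w$: since $A$ is rank-one symmetric we may write $A = \lambda a a^T$ and $B = \mu b b^T$; then the equation reads $\lambda (a\cdot u)^2 = \mu (b \cdot w)^2$, which visibly has many nonzero solutions — e.g.\ pick $u$ with $a \cdot u$ any prescribed value (and, if $s < M$, with vanishing first $s$ coordinates, which is possible as long as $a$ is not supported entirely on those coordinates; if it is, pick $u$ in the first-$s$-coordinate subspace instead so that $a\cdot u$ can still be made nonzero, or simply arrange $a\cdot u = 0 = b\cdot w$), then solve for $w$ analogously. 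One must only be slightly careful that the resulting $u,w$ are nonzero so that $[\UU,\VV]$ is a legitimate point of $\Pro(\Lambda_s)$ and so that the rank-one conditions on $A,B$ (which are part of the ambient $V_M^1 \times V_M^1$) are already built in.

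The main obstacle — and it is a mild one — is the interaction between the overlap constraint (first $s$ coordinates of $u$ and $w$ must match) and the requirement that the chosen vectors be nonzero while still satisfying $u^T A u = w^T B w$; one needs to check that the degenerate rank-one forms $u \mapsto u^T A u$ and $w \mapsto w^T B w$ do not both vanish identically on the relevant coordinate subspace in a way that forces $u$ or $w$ to be zero, but since a nonzero symmetric rank-one form is nonvanishing on \emph{some} vector of any coordinate subspace of dimension $\geq 1$ meeting its support, and one always has freedom to put $u\cdot a = 0 = w \cdot b$ otherwise, this is routine. Hence $\pi_2$ is surjective and $\dim \pi_2(\b_{M,N,s}) = \dim \Pro(V_M^1 \times V_M^1) = 2(M-1) + \dots$, which is what is needed to feed into the subsequent fiber-dimension computation (à la Propositions~\ref{dimbmn} and \ref{real_dimbmn}) en route to Theorems~\ref{real_genericpairs} and \ref{real_generica}.
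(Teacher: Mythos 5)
Your construction is the same as the paper's: take both frames constant, $\UU=(u,\dots,u)$ and $\VV=(w,\dots,w)$, note that the whole system \eqref{eq.secondtime} collapses to the single quadric $u^TAu=w^TBw$, and impose the overlap by making the first $s$ coordinates of $u$ and $w$ agree (e.g.\ vanish). Your treatment of the generic case $s<M$ matches the paper's proof, and your level of care about the existence of nonzero solutions on the coordinate subspace is comparable to (in fact slightly greater than) the paper's.

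There is, however, one concrete error: the case $s=M$. You write that for $s=M$ ``we just need $u^TAu=w^TBw$ with no extra restriction,'' but the overlap condition is monotone in $s$ — with constant frames the $i$-th rows of $A_{\UU}$ and $A_{\VV}$ are $(u_i,\dots,u_i)$ and $(w_i,\dots,w_i)$, so requiring the first $s$ rows to agree forces $u_i=w_i$ for $i\le s$, and at $s=M$ this forces $u=w$. The correct statement is therefore the opposite of yours: $s=M$ is the most constrained case, and the equation becomes $u^T(A-B)u=0$. This is still solvable by a nonzero $u$ because $A\neq B$, so $A-B$ is a nonzero symmetric matrix and a nonzero quadratic form over $\C$ has a nontrivial zero (for $M\ge 2$); this is exactly how the paper disposes of $s=M$, taking $\UU=\VV=(\alpha,\dots,\alpha)$ with $\alpha^T(A-B)\alpha=0$. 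As written, your $s=M$ construction produces a pair $(\UU,\VV)$ that need not lie in $\Pro(\Lambda_s)$ at all, so that case of the lemma is not proved; the repair is immediate.
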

\begin{proof}
  If $s < M$, pick a pair $(A ,B) \subseteq V_M^1 \times V_M^1$
  and choose non-zero vectors $\alpha , \beta \in R^M$ whose first $s$-coordinates
  are zero and satisfies the equation
  $\alpha A \alpha  - \beta^T B \beta =0$. Now take
  $\UU = (\alpha, \ldots , \alpha)$ and $\VV = (\beta, \ldots , \beta)$.
  If $s=M$ choose a vector
  satisyfing the equation $\alpha^T( A  -  B )\alpha = 0$
  and let $\UU = \VV = (\alpha,\ldots , \alpha)$
  In either case $([\UU, \VV] , [A, B]) \in \B_{M,N,s}$ so $[A,B] \in \pi_2(\B_{M,N,s})$.
 
\end{proof}

\begin{proposition}
The dimension of $\b_{M,N,s}$ equals $2MN + 2M - \frac{N}{2}-sN-3$
\end{proposition}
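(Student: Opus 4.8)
The plan is to follow the strategy of Propositions \ref{dimbmn} and \ref{real_dimbmn}, computing $\dim \b_{M,N,s}$ by analyzing the projection $\pi_2 \colon \b_{M,N,s} \to \Pro(V_M^1 \times V_M^1)$. By Lemma \ref{claim_real_pairs} this map surjects onto the (quasi-projective) locus of pairs $[A,B]$ with $A \neq B$, and since $\dim V_M^1 = M$ (noted after Lemma \ref{symdim}) this base has dimension $2\dim V_M^1 - 1 = 2M - 1$. So it remains to compute the dimension of a general fiber of $\pi_2$ and to verify that $\pi_2$ is flat; then Proposition \ref{prop.flatness}(iv) gives the dimension of every irreducible component of $\b_{M,N,s}$, and hence of $\b_{M,N,s}$ itself.

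Fix $[A,B]$ with $A \neq B$. The fiber $\pi_2^{-1}([A,B])$ is the subvariety of $\Pro(\Lambda_s)$ cut out by the $N/2 + 1$ homogeneous quadratic equations \eqref{eq.secondtime}, where $\Lambda_s$ is the space of $s$-overlapping pairs of real $N$-element frames on $\R^M$, of dimension $2MN - sN$, so that $\dim \Pro(\Lambda_s) = 2MN - sN - 1$. The key structural point, exactly as in the proof of Proposition \ref{real_dimbmn}, is that these equations involve pairwise disjoint blocks of frame vectors: the first only $\alpha_0,\beta_0$, the second only $\alpha_{N/2},\beta_{N/2}$, and the $k$-th only $\alpha_k,\alpha_{N-k},\beta_k,\beta_{N-k}$. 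Accordingly $\Lambda_s$ decomposes as a direct sum of the corresponding coordinate subspaces, of real dimensions $2M-s$, $2M-s$, and $4M-2s$ respectively (the $s$-shifts reflecting the overlap condition that $\alpha_n$ and $\beta_n$ agree in their first $s$ coordinates), and the zero locus of \eqref{eq.secondtime} is the projectivization of a direct sum of affine quadric cones, one in each summand. Granting that each of these quadrics is a nonzero form, each equation drops the dimension by exactly one, so the fiber is a product of $N/2 + 1$ quadric hypersurfaces of dimension $\dim \Pro(\Lambda_s) - (N/2 + 1) = 2MN - sN - N/2 - 2$.

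The one step requiring genuine care — the main (though mild) obstacle — is checking that each of the $N/2 + 1$ quadratic forms remains nonzero after the overlap constraints are imposed. Here I will argue as in Lemma \ref{dim} by restricting to the diagonal: if, say, $\alpha_0^T A \alpha_0 - \beta_0^T B \beta_0$ vanished identically on the subspace where $\alpha_0$ and $\beta_0$ have equal first $s$ coordinates, then restricting further to $\alpha_0 = \beta_0$ would force the symmetric quadratic form $u^T(A - B)u$ to vanish identically, hence $A = B$, a contradiction; the paired equations $\alpha_k^T A \alpha_k + \alpha_{N-k}^T A \alpha_{N-k} - \beta_k^T B \beta_k - \beta_{N-k}^T B \beta_{N-k}$ are handled identically. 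Note that, unlike in the complex case, no appeal to \cite[Lemma 3.5]{conca2015algebraic} is needed, since for a nonzero symmetric matrix $C$ the form $u^T C u$ is automatically not identically zero. With this in hand, every fiber of $\pi_2$ is a product of hypersurfaces, so $\pi_2$ is flat by Proposition \ref{prop.flatness}(iii); Proposition \ref{prop.flatness}(iv) then shows that every irreducible component of $\b_{M,N,s}$ has dimension $\dim \Pro(V_M^1 \times V_M^1) + \dim \pi_2^{-1}([A,B]) = (2M - 1) + (2MN - sN - N/2 - 2) = 2MN + 2M - N/2 - sN - 3$, and therefore $\dim \b_{M,N,s} = 2MN + 2M - N/2 - sN - 3$. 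The odd-$N$ case is identical with $N/2$ replaced by $\lfloor N/2 \rfloor$ throughout.
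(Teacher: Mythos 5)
Your proof is correct and follows essentially the same route as the paper's: surjectivity of $\pi_2$ via Lemma \ref{claim_real_pairs}, the decomposition of the system \eqref{eq.secondtime} into equations in disjoint blocks of frame vectors of dimensions $2M-s$, $2M-s$ and $4M-2s$, flatness of $\pi_2$ from Proposition \ref{prop.flatness}(iii), and the identical fiber-dimension count $2MN - sN - N/2 - 2$. Your only (correct) refinement is to spell out the non-vanishing of each quadric by restricting to the diagonal $\alpha=\beta$, where the real symmetric case needs only that $u^T(A-B)u\not\equiv 0$ for $A\neq B$, rather than the Hermitian non-vanishing lemma invoked in the complex case.
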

\begin{proof}
  By Lemma \ref{claim_real_pairs} we know that $\pi_2$ is surjective
  so to compute the dimension of $\b_{M,N,s}$ we just need
  to compute the dimension of the fibers of $\pi_2$.
  We use the same technique as in the proof of Lemma \ref{dim}
  to verify that for a given pair $( A ,  B ) \in V_M^1 \times V_M^1$ with $ A  \neq  B $ the equations that make up
\eqref{eq.secondtime}
are all non-degenerate and are algebraically independent (the latter is true since each equation involves a different set of indeterminates).
This implies that the fiber
over any $(A,B)$ with $A \neq B$ is the product of $N/2 +1$ quadric hypersurfaces
whose dimensions are $2M-s-1, 2M-s-1, \overbrace{4M-1-2s,\ldots , 4M-1-2s}^{N/2 -1}$
respectively.
Putting this together we have that the dimension of the fibers, after projectivization is
	$$\dim \pi_2^{-1}( [A , B] ) = 2(2M-1-s)+ \left(\frac{N}{2}-1\right)(4M-1-2s) - 1 = 2MN - \frac{N}{2}-sN-2$$
Once again this implies that  map $\pi_2$ is flat and we conclude that
$$\dim(\b_{M,N,s})= 2M -1 + 2MN - \frac{N}{2}-sN-2 = 2M + 2MN -\frac{N}{2} -sN -
3$$
\end{proof}
\begin{proof}[Proof of Theorem \ref{real_genericpairs}]
We know that the linear map $\B_{\UU,\VV}$ does not contain any distinct pairs of rank-one symmetric matrices if and only if
$[\alpha_0,  \dots, \alpha_N, \beta_0, \ldots , \beta_N] \not\in \pi_1((\b_{M,N,s})_\R)$
Since the dimension of the image of a projection is at most the dimension of the source, we see that
$$\dim(\pi_1(\b_{M,N,s})) \leq \dim(\b_{M,N,s}) =2MN + 2M - \frac{N}{2}-sN-3$$
When $N>4M-4$, we have $\dim(\pi_1(\b_{M,N,s})) < 2MN- sN - 1 = \dim \Pro(\Lambda_s)$, and so
the generic pair of $s$-overlapping frames $\UU = (\alpha_0,\ldots, \alpha_{N-1}), \VV = (\beta_0, \ldots , \beta_{N-1})$ does not lie in $\pi_1((\b_{M,N,s})_\R)$
\end{proof}

\begin{proof}[Proof of Theorem \ref{real_generica}]
  Consider the morphism $\pi_1: \b_{M,N,s}\to \Pro(\Lambda_s)$.
  Arguing exactly as in the first paragraph of the proof of Theorem \ref{generica}
  we can reduce to the case that $\pi_1$ is dominant. Once again
we can apply 
Proposition \ref{prop.vakil} and conclude that there is a Zariski open set of
points $q = [\UU,\VV]$ such that the dimension of the fiber
  \begin{eqnarray*}
    \dim \pi_1^{-1}(q) & = & \dim \b_{M,N,s} - \dim \Pro(\Lambda_s)\\
    &= & \left(2MN + 2M - \frac{N}{2}-sN-3\right) - (2MN-sN-1) = 2M - \frac{N}{2}-2
    \end{eqnarray*}
with the understanding that if $N$ is such that $2M - N/2 -2 < 0$ then the fiber is empty.

Much like in the proof of Theorem \ref{generica} the fiber of $\pi_1$ over $[(\UU,\VV)] \in \Pro(\Lambda_s)$
is the quasi-projective subvariety $\Gamma$ of $\Pro(V_M^1 \times V_M^1)$ given by
$$\Gamma = \{[A ,B] \in  \Pro(V_M^1 \times V_M^1)|  \eqref{eq.secondtime} \text{ holds for } [A,B] \}$$
and consider the projection $p_1$ onto the first coordinate, which is well-defined because
we assume $A \neq 0$. Since
$\dim \overline{p_1(\Gamma)} \leq \dim \Gamma = 2M - \frac{N}{2}-
2.$
we see that when  $N>2M-2$, $\pi_1(\Gamma)$ lies in a proper algebraic subset of
        $\Pro(V_M^1)$.
This implies that if $N > 2M-2$, for a generic choice of $s$-overlapping frames $(\UU,\VV)$
the set of rank-one $A$ such that there is no rank-one $B$ with $A \neq B$ satisfying
\eqref{eq.secondtime} is Zariski dense.
 Restricting 
$[U,U',V,V']$ to the real points of $\Pro(\Lambda_s)$ corresponding to $s$-overlapping real frames
frames and restricting $A$ to the real points $V^1_M$ corresponding
        to rank-one symmetric real matrices
        we see that for a 
        a generic pair of $N > 2M-2$ element overlapping frames $\UU, \VV$ and a
        generic choice of vector $x \in \R^M$, there is no $y \neq \pm x$
such that $b_{L_{\UU}(x)} = b_{L_{\VV}(y)}$.
\end{proof}
\section{Future directions} Our proof of Theorem~\ref{thm.realsparse} does not give an explicit method for determining whether a specific basis is generic, nor does it provide an algorithm for reconstruction. A recent paper~\cite{amir2026stability} gives a criterion for the bi-Lipschitz stability of the second moment map for signals which are sparse with respect to a generic basis. A natural question for further work is to determine whether the results of~\cite{amir2026stability} can be used to give an explicit algorithm for determining when sparse vectors with respect to given basis can be recovered from their power spectrum. We note that for the related problem of recovering a sparse signal from the absolute value of its continuous Fourier transform, Beinert and Plonka~\cite{beinert2018enforcing} used Prony's method to give an algorithm for signal recovery up to dihedral ambiguities.
\section*{Acknowledgment}
Both authors were supported by the BSF grant no. 2020159 and D.E. was also supported by NSF-DMS 2205626.
\bibliographystyle{plain}

\end{document}